\documentclass[11pt]{article}

\usepackage{style}

\title{Extending the Wasserstein metric to positive measures}
\author{H. Leblanc\thanks{\url{hugo.leblanc@univ-amu.fr}}, T. Le Gouic\thanks{\url{thibaut.le_gouic@math.cnrs.fr}}, J. Liandrat\thanks{\url{jacques.liandrat@centrale-marseille.fr}} and M. Tournus\thanks{\url{magali.tournus@centrale-marseille.fr}}
\thanks{Centrale Marseille, I2M, UMR 7373, CNRS, Aix-Marseille univ., Marseille, 13453, France} }
\date{\today}
\begin{document}

\maketitle

\begin{abstract}
We define a metric in the space of positive finite positive measures that extends the 2-Wasserstein metric, i.e. its restriction to the set of probability measures is the 2-Wasserstein metric.
We prove a dual and a dynamic formulation and  extend the gradient flow machinery of the Wasserstein space.
In addition, we relate the barycenter in this space to the barycenter in the Wasserstein space of the normalized measures.
\end{abstract}

\section{Introduction}

The 2-Wasserstein metric $W_2$ is a metric based on optimal transport on the space of probability measures on $\R^d$.
The resulting metric space is called the Wasserstein space.
It is defined by
\begin{equation}\label{eq:W2}
W_2^2(\mu,\nu) = \min_{\pi \in \Pi(\mu,\nu)} \int \vert x-y \vert^2 \ud\pi(x,y),
\end{equation}
where $\Pi(\mu,\nu)$ denotes the set of probability measures on $\R^d \times \R^d$ with marginals $\mu$ and $\nu$.
This formulation is due to Kantorovitch \cite{kantorovich1942translocation,kantorovich1948problem} and based on the original work of Monge \cite{monge1781memoire}.
Kantorovitch also famously introduced the dual formulation
\begin{equation}\label{eq:W2_dual}
W^2_2(\mu,\nu) = \sup \int \phi \ud\mu + \int \psi \ud\nu,
\end{equation}
where the supremum is taken among the pairs of functions $(\phi,\psi) \in \mc{C}_b(\R^d)^2$ satisfying $\phi(x)+\psi(y)\le \vert x-y\vert^2$. This formulation has been central to many other results, among which the Brenier theorem stating that in regular cases, the minimum in the primal problem is attained for a coupling supported on the graph of a function: the Brenier or Monge map \cite{brenier1991polar}.

A dynamic formulation due to Benamou and Brenier \cite{benamou2000computational}, introduced for numerical purposes, reinterprets the Wasserstein metric as a minimization of a kinetic energy functional
\begin{equation}\label{eq:W2_dyn} 
W_2^2(\mu,\nu) = \inf \int_0^1\int \vert v_t \vert^2 \ud\mu_t \ud t.
\end{equation}
Here, the infimum is taken among the probability measures valued maps $\mu_t:[0,1] \to \mc{P}_2(\R^d)$ and vector field valued maps $v_t:[0,1] \to L^2(\mu_t)$ satisfying the weak transport partial differential equation
\[ 
\partial_t \mu_t + \dive (v_t \mu_t) = 0, \quad \mu_0=\mu, \; \mu_1=\nu. 
\]
This formulation formally endows the Wasserstein space with a Riemannian-like structure.
It thus benefits from a pseudo-tangent structure, that gives rise to the reinterpretation of many PDEs as gradient flows on the Wasserstein space, a trend that initiated with the seminal work of Jordan, Kinderlehrer and Otto \cite{jordan1998variational}.
The resulting geodesic structure on the Wasserstein space makes it a natural object to study the Fréchet mean, a.k.a. barycenters on those space.
The Wasserstein barycenters were introduced in \cite{agueh2011barycenters} and have since been applied, for instance, in image processing \cite{rabin2012wasserstein,bonneel2015sliced,simon2020barycenters} and Bayesian inference \cite{backhoff2022bayesian}, and have attracted theoretical interest as well.

We refer to the monographs \cite{Villani2003,Ambrosio2005,santambrogio2015optimal} for further background on the topic.

The numerical complexity of the computation the $W_2$ metric is still a very active research topic, with frequently new algorithms proposed.
Among the many algorithms proposed, we mention the simplex method which arises from the field of linear programming, the Sinkhorn algorithm \cite{cuturi2013sinkhorn,thibault2021overrelaxed,altschuler2017near}, and more recently the Back-and-Forth method \cite{jacobs2020fast} that shows striking visual results.
We refer the reader to \cite{COTFNT} for further details.

The properties of $W_2$ and the advances in numerical computation have made $W_2$ a very efficient multidisciplinary tool to compare and interpolate data. 
Many of the uses in the field of computer science, for image processing, color transfer \cite{rabin2014adaptive} rely on the existence of the Monge map; style transfer \cite{mroueh2019wasserstein} relies on the existence of geodesic to interpolate artistic styles; in machine learning the dual formulation of $W_2$ metric gives a more stable method to train a GAN \cite{liu2019wasserstein}; in natural language processing, the Wasserstein metric can be used to learn a bag-of-words representation ; in biology, $W_2$ is used to study the trajectory of the differentiation of a cell \cite{schiebinger2019optimal} using geodesic interpolation; in economics \cite{ponti2022unreasonable}, the $W_2$ metric allows for a deeper analysis of the customer experience for different stores by comparing the key performance indicator distributions.

While being central to the definition of the Wasserstein metric, the requirement of two measures to be of same mass is often a strong limitation.
For instance, in point and shape matching between point cloud \cite{shen2021accurate,bonneel2019spot}, or in \cite{schiebinger2019optimal} where cell counts are not constant and need to be matched over time.
This is often dealt with in practice by normalizing the measures, to the price of loosing the information of the total mass and the departing from the theoretical framework of the Wasserstein metric.
This limitation is the starting point of goal of this paper: the search for a meaningful metric that extends $W_2$ to the broader set of positive finite measures. 
This leads also to the simple question : is there a non trivial metric on finite positive measures whose restriction on probability measures is the Wasserstein metric?

\paragraph{Related work}
So far, the problem has been dealt with differently and other metrics based on optimal transport have been introduced.
The theory of Unbalanced (or partial) Optimal Transport (UOT) has been developed to that aim, and is based on classical optimal transport.
Different approaches based on the multiples formulation of the classic OT are used to define meaningful substitutes of $W_2$ for the broader space of positive finite measures.
Here we describe two of them, the Entropy-Transport and the dynamic formulations.

To define the Entropy-Transport ($\ET$) formulation \cite{piccoli2014generalized, gallouet2021regularity, liero2018optimal, chizat2018unbalanced} in a broad setting, we first recall the Csiszar $f$-divergences. 
Let $\mu,\nu$ be positives measures on $\R^d$, $f:\R_+ \to [0,+\infty]$ be a convex, lower semi-continuous function such that $f(1)=0$ and set $f_{\infty}'(1) = \lim\limits_{t \to +\infty} \frac{f(t)}{t}$, then the $f$-divergence is
\begin{equation*}\label{eq:fDiv}
D_f(\mu \vert \nu) = \int f\left(\frac{\ud\mu}{\ud\nu}\right) \ud \nu + f_{\infty}'(1)\nu^{\perp}(\R^d), \text{with the Lebesgue decomposition }  \mu = \frac{\ud\mu}{\ud\nu}\nu + \nu^\perp.
\end{equation*}
For $c(x,y): \R^d \times \R^d \to [0,+\infty]$ a lower semi-continuous cost function such that $c(x,x)=0$ for all $x\in\R^d$,  the Entropy-Transport formulation is a relaxation of the primal formulation of $W_2$ (equation~\eqref{eq:W2}): a mass transportation using the classical OT while relaxing the constraint on the marginals of the transport measure by penalizing the creation and destruction of mass with Csiszar f-divergences
\begin{equation}\label{eq:UOT_ET}
\ET(\mu,\nu) = \inf\limits_{\pi} \bigg\{ D_{f}(P_0\#\pi \vert \mu) + D_{f}(P_1\#\pi \vert \nu) + \int c(x,y) \ud\pi(x,y) \bigg\},
\end{equation}
where the infimum is taken among positive measures and  $P_0\#\pi$ and $P_1\#\pi$ are the first and second marginals of $\pi$.
Choosing wisely $f$ and costs $c$ give rise to different metrics in the set of positive measures.

The dynamic approach ($\Dyn$) of UOT is based on the dynamic formulation of $W_2$ \eqref{eq:W2_dyn} and goes as follows.
Given a convex, continuous function $L(v,g):\R^d \times \R \to \R$ which is minimal at $L(0,0)=0$ --- called the Lagrangian function, the dynamic UOT formation is given by minimizing the action functional
\begin{equation}\label{eq:UOT-BB}
\Dyn(\mu,\nu)=\inf \int_0^1 \int L(v_t,g_t)\ud\mu_t \ud t,
\end{equation}
where $g_t$ is a source term accounting for the creation and destruction of mass, the infimum is taken among the triplets $(v_t,g_t,\mu_t)$ satisfying the transport equation with source
\[
\partial_t\mu_t + \dive (v_t \mu_t) = g_t \mu_t, \quad \mu_0=\mu, \; \mu_1 = \nu.  
\] 
The equivalence under some assumptions between these two formulations of UOT has been proven in \cite{liero2018optimal,chizat2018unbalanced}.
See for instance \cite{chizat2018interpolating, lombardi2015eulerian, maas2015generalized} for further details on UOT.

Among the UOT based metrics, the one that has attracted the most interest if the  Hellinger-Kantorovich metric $\HK$ (also called the Wasserstein-Fisher-Rao or Hellinger-Fisher-Rao), introduced simultaneously in \cite{chizat2018interpolating,kondratyev2016new,liero2018optimal}.
This metric can be defined using both UOT formulations described above 
\[
\HK^2(\mu,\nu) = \ET(\mu,\nu) = \Dyn(\mu,\nu),
\]
where the Csiszar $f$-divergences in the Entropy-Transport formulation \eqref{eq:UOT_ET} is defined using $f(t)=t\log(t)+1-t$ and  the transport cost $c(x,y) = -2\log \big( \cos(\vert x-y\vert \wedge \frac{\pi}{2}) \big)$.
The Lagrangian function $L(v,g)$ in the dynamic formulation \eqref{eq:UOT-BB} of $\HK$ is $L(v,g)= \vert v \vert^2 + \frac{1}{4}\vert g \vert^2$.

The $\HK$ metric is frequently used as a $W_2$-like metric to compare positive measures in the sense that $\HK$ inherits from the multiple formulations that $W_2$ enjoys.
However,   $\HK$ does not extend $W_2$ to positive measures, since for most probability measures $\mu,\nu \in \mc{P}_2(\R^d)$ we have $\HK(\mu,\nu) < W_2(\mu,\nu)$; see Figure~\ref{fig:WOPGeod2}.

\begin{figure}[h]
    \centering
    \begin{subfigure}[b]{0.4\linewidth}
    \includegraphics[width=\linewidth]{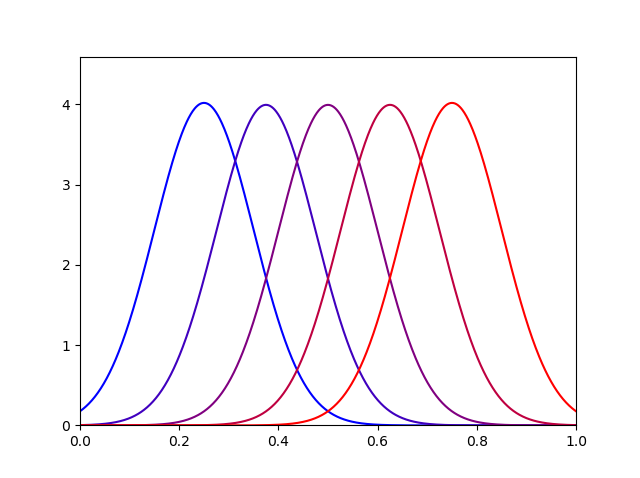}
    \caption*{$\WOP$ geodesic}
    \end{subfigure}
    \begin{subfigure}[b]{0.4\linewidth}
    \includegraphics[width=\linewidth]{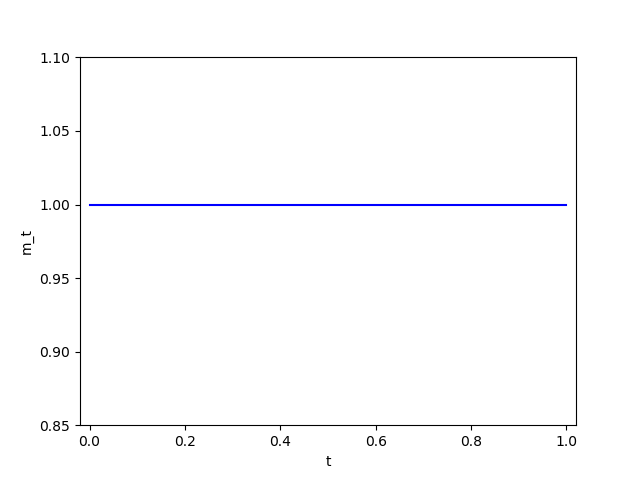}
    \caption*{ mass $m_t$ of the $\WOP$ geodesic}
    \end{subfigure}

    \begin{subfigure}[b]{0.4\linewidth}
    \includegraphics[width=\linewidth]{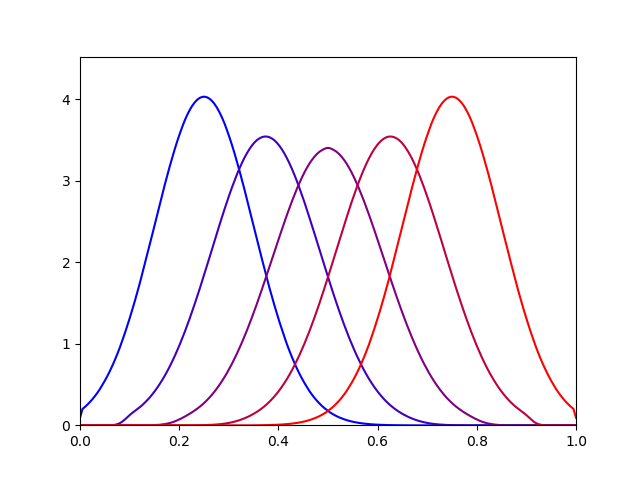}
    \caption*{$\HK$ geodesic}
    \end{subfigure}
    \begin{subfigure}[b]{0.4\linewidth}
    \includegraphics[width=\linewidth]{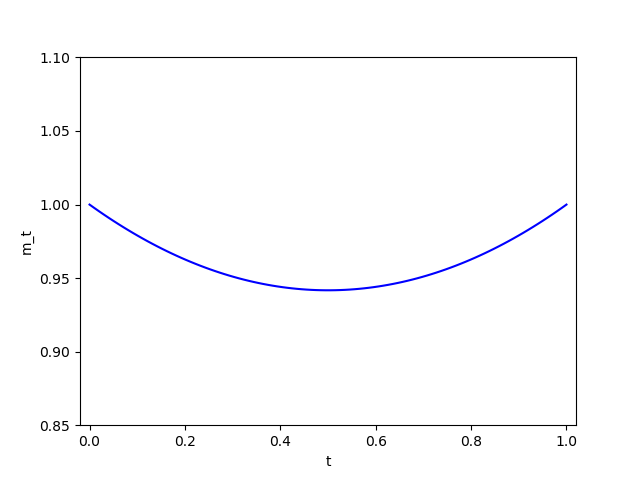}
    \caption*{ mass $m_t$ of the $\HK$ geodesic}
    \end{subfigure}
    \caption{Comparison of geodesics between Gaussian measures for the $\WOP$ and $\HK$ metric}
    \label{fig:WOPGeod2}
\end{figure}

A central observation at the origin of present work, is that in all these metrics, the space of probability measures is not geodesically convex, i.e. the geodesic between any two distinct probability measures does not lie in the space of probability measures.
In fact, we prove in Section~\ref{sec:further} that it is impossible to extend $W_2$ on $\R^d$ with the UOT framework.
This leads to the following question : is there a non trivial metric defined on all positive measures whose restriction on probability measures is the Wasserstein distance?

We answer with the affirmative and propose a metric that preserves the multiple formulations, whose geodesics are 'meaningful' in a way we develop, and that inherits the algorithmic computations of the $W_2$ metric, interpolations and barycenters.

\paragraph{Outline}
The outline of the paper is as follows.
In Section~\ref{sec:WOP}, we define a metric $\WOP$ on positive measure that $W_2$.
We prove a dual formulation and study its topological and geometric properties.
Section~\ref{sec:Dynamic} is devoted to a dynamic formulation of $\WOP$ and the subsequent gradient flows.
In Section~\ref{sec:barycenter}, we describe and analyze the barycenter associated to $\WOP$, and Section~\ref{sec:further} discusses further results and developments. 

\paragraph{Notation}
\begin{itemize}
    \item $\mc{P}_2(\R^d)$ is the set of positive finite measures with second order moment,
    \item $\mc{M}(\R^d)$ is the set of positive finite measures with second order moment,
    \item $\mc{M}_K(\R^d)$ is the set of positive finite measures with control on the second order moment (see Definition~\eqref{def:Mk}),
    \item $m_\mu$ is the total mass of $\mu$,
    \item $\bar\mu$ for $\mu/m_\mu$ if $m_\mu>0$ and $\delta_{x_0}$ otherwise,
    \item $M_{x_0}(\mu)=\int d^2(x,x_0)\ud\mu(x)$,
    \item $T_a(x) = a(x-x_0) + x_0$,
    \item $T\#\mu$ is the pushforward of $\mu$ by $T$,
    \item $\Pi(\mu,\nu)$ is the set of transportation plans between $\mu$ and $\nu$,
    \item $\Pi_o(\mu,\nu)$ is the set of optimal transportation plan between $\mu$ and $\nu$  for the cost function $\vert x-y\vert^2$,
    \item $C_c^\infty(\Omega)$ is the set of infinitely differentiable functions with compact support in $\Omega$.
\end{itemize}

\section{The $\WOP$ metric and its basic properties}\label{sec:WOP}
As mentioned in the introduction, we aim at defining a metric on the space of finite positive measures $\mc{M}(\R^d)$ that coincides with the 2-Wasserstein metric on the set of probability measures $\mc{P}_2(\R^d)$.
Due to the natural conic structure of the space of finite positive measures, our metric is inspired by the cone on $\R^d$ (see Figure \ref{fig:my_label}).

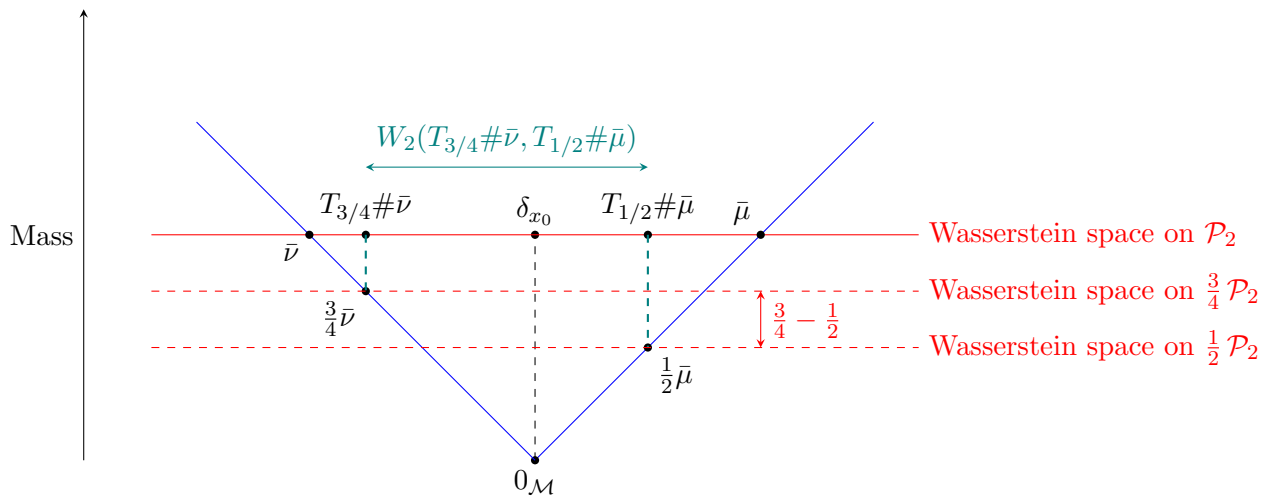
\begin{figure}[h]
    \centering
\begin{tikzpicture}[scale=3]
\draw node[circle,fill,scale=0.3] (a) at (0,1) {};
\draw node[anchor=south] (b) at (0,1) {$\delta_{x_0}$};
\draw[dashed] (0,0) -- (0,1);

\draw[blue] (1.5,1.5) -- (0,0);
\draw node[circle,fill,scale=0.3] () at (0,0) {};
\draw node[anchor=north] () at (0,0) {$0_{\mc M}$};
\draw[red] (-1.7,1) -- (1.7,1) node[anchor=west]{Wasserstein space on $\mc P_2$};

\draw node[circle,fill,scale=0.3] (a) at (1,1) {};
\draw node[anchor=south east] (b) at (1,1) {$\bar\mu$};

\draw node[circle,fill,scale=0.3] (c) at (0.5,0.5) {};
\draw node[anchor=north west] (d) at (0.5,0.5) {$\frac{1}{2}\bar\mu$};

\draw node[circle,fill,scale=0.3] (c) at (0.5,1) {};
\draw node[anchor=south] (d) at (0.5,1) {$T_{1/2}\#\bar\mu$};

\draw[thick,teal,dashed] (0.5,1) -- (0.5,0.5);

\draw[blue] (-1.5,1.5) -- (0,0);
\draw node[circle,fill,scale=0.3] (a) at (-1,1) {};
\draw node[anchor=north east] (b) at (-1,1) {$\bar\nu$};

\draw node[circle,fill,scale=0.3] (c) at (-0.75,0.75) {};
\draw node[anchor=north east] (d) at (-0.75,0.75) {$\frac{3}{4}\bar\nu$};

\draw node[circle,fill,scale=0.3] (c) at (-0.75,1) {};
\draw node[anchor=south] (d) at (-0.75,1) {$T_{3/4}\#\bar\nu$};

\draw[thick,teal,dashed] (-0.75,1) -- (-0.75,0.75);

\draw[red,dashed] (-1.7,0.75) -- (1.7,0.75) node[anchor=west]{Wasserstein space on $\frac{3}{4}\,\mc P_2$};
\draw[red,dashed] (-1.7,0.5) -- (1.7,0.5) node[anchor=west]{Wasserstein space on $\frac{1}{2}\,\mc P_2$};

\draw[-stealth] (-2,0) -- (-2,2);
\draw node[anchor=east] () at (-2,1) {Mass};

\draw[teal, stealth-stealth] (-0.75,1.3) -- (0.5,1.3);
\draw node[teal,anchor=south] () at (-0.125,1.3) {$W_2(T_{3/4}\#\bar\nu,T_{1/2}\#\bar\mu)$};

\draw[red, stealth-stealth] (1,0.5) -- (1,0.75);
\draw node[red,anchor=west] () at (1,0.62) {$\frac{3}{4}-\frac{1}{2}$};

\end{tikzpicture}
    \caption{By analogy with $\R^d$, the squared distance between $\frac{3}{4}\bar\nu$ and $\frac{1}{2}\bar\mu$ is decomposed as the sum of squared distance between $T_{3/4}\#\bar\nu$ and $T_{1/2}\#\bar\mu$ (that are on the geodesics between $\delta_{0}$ and respectively $\bar\nu$ and $\bar\mu$) and the squared distance between the space of measures with total mass $1/2$ and total mass $3/4$ - set to $(1/2-3/4)^2$.
    This gives the metric $\WOP$ of definition~\ref{def:Wass}}
    \label{fig:my_label}
\end{figure}

\begin{definition}[Wasserstein meetric On Positive measures]\label{def:Wass}
Given $x_0 \in \R^d$, let $(\mu,\nu) \in \mc{M}(\R^d)^2$.
The Wasserstein On Positive measures $(\WOP)$ metric is defined by
\[
\WOP^2(\mu,\nu) = (m_\mu- m_\nu )^2 + W_2^2( T_{m_\mu}\#\bar{\mu} ,T_{m_\nu}\#\bar{\nu} )
\]
where $T_a(x) = a(x-x_0)+x_0$, $x_0$ is called the reference point.
When the value of $x_0$ is relevant we will use the notation $\WOP_{x_0}$ for $\WOP$.
\end{definition}

On can check that $\WOP$ is a metric.
\begin{theorem}[$\WOP$ is a metric]\label{thm:Wmetric}
    $\WOP$  is a metric on the space $\mc{M}(\R^d)$.
\end{theorem}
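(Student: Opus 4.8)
The plan is to realize $\WOP$ as the pullback, through an injective embedding, of a standard product metric. Introduce the map
\[
\Phi \colon \mc{M}(\R^d) \to \R_+ \times \mc{P}_2(\R^d), \qquad \Phi(\mu) = \big(m_\mu,\; T_{m_\mu}\#\bar\mu\big).
\]
First I would check that $\Phi$ is well defined: $\bar\mu$ is a probability measure with finite second moment (or $\delta_{x_0}$ when $m_\mu=0$), and $T_{m_\mu}$ is affine hence Lipschitz, so $T_{m_\mu}\#\bar\mu$ again has finite second moment and lies in $\mc{P}_2(\R^d)$; indeed $\int |x-x_0|^2 \ud (T_{m_\mu}\#\bar\mu) = m_\mu^2 \int |y-x_0|^2 \ud\bar\mu < +\infty$. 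With this notation the definition reads exactly
\[
\WOP^2(\mu,\nu) = |m_\mu - m_\nu|^2 + W_2^2\big(T_{m_\mu}\#\bar\mu,\, T_{m_\nu}\#\bar\nu\big),
\]
i.e.\ $\WOP^2$ is the squared $\ell^2$-combination of the Euclidean distance on the mass coordinate and of $W_2$ on the shape coordinate.

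Next I would invoke that the $\ell^2$-combination of two metrics is a metric. On $\R_+ \times \mc{P}_2(\R^d)$ define $d\big((a,\alpha),(b,\beta)\big) = \sqrt{|a-b|^2 + W_2^2(\alpha,\beta)}$. Since $|\cdot|$ on $\R$ and $W_2$ on $\mc{P}_2(\R^d)$ are genuine metrics (the latter being standard), $d$ is nonnegative, symmetric, separates points, and satisfies the triangle inequality: for three points one bounds each coordinate distance by the corresponding triangle inequality, uses that $(x,y)\mapsto\sqrt{x^2+y^2}$ is nondecreasing in each nonnegative argument, and concludes with the Minkowski inequality in $\R^2$, namely $\sqrt{(a+a')^2+(b+b')^2} \le \sqrt{a^2+b^2} + \sqrt{a'^2+b'^2}$. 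Consequently $\WOP = d \circ (\Phi\times\Phi)$ immediately inherits nonnegativity, symmetry, and the triangle inequality, and is at least a pseudometric.

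The only property left is that $\WOP(\mu,\nu)=0$ forces $\mu=\nu$, which reduces to the injectivity of $\Phi$. Suppose $\Phi(\mu)=\Phi(\nu)$, so $m_\mu=m_\nu=:m$ and $T_m\#\bar\mu = T_m\#\bar\nu$. If $m=0$ then both $\mu$ and $\nu$ are the zero measure and we are done. If $m>0$ then $T_m$ is an affine bijection of $\R^d$ (its linear part is multiplication by $m\neq 0$) with inverse $T_{1/m}$, so applying $T_{1/m}\#(\cdot)$ gives $\bar\mu = \bar\nu$, and multiplying by the common mass $m$ yields $\mu=\nu$. There is no deep obstacle here; the only point requiring care is the degenerate apex of the cone, where I must check that the convention $\bar\mu=\delta_{x_0}$ is consistent with $T_0\equiv x_0$ so that $\Phi$ remains well defined and injective at $m=0$, and that the invertibility of $T_{m}$ is used exactly on the regime $m>0$ where it holds.
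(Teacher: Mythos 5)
Your proof is correct and follows essentially the same route as the paper's: both arguments reduce the triangle inequality to the Minkowski inequality applied to the $\ell^2$-combination of the metrics $|\cdot|$ on the mass coordinate and $W_2$ on the rescaled normalized measure, and both obtain definiteness from the invertibility of $T_{m}$ for $m>0$. Your packaging as a pullback through the embedding $\Phi$ and your explicit treatment of the degenerate apex $m=0$ are welcome clarifications but do not change the substance of the argument.
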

\begin{proof}
 Positivity and symmetry are immediate. For positive-definiteness :
\[\WOP(\mu,\nu) = 0 \iff \left\{ \begin{array}{ll} 
 m_\mu = m_\nu\\
 T_{m_\mu}\#\bar{\mu} = T_{m_\nu}\#\bar{\nu}\\
 \end{array} \right. \iff \mu = \nu
 \]
 For the triangle inequality, using that $\vert . \vert$ and $ W_2$ are  metrics respectively on $\R$ and $\mc{P}_2(\R^d)$, and the Minkowski inequality, we show the following.
 \begin{align*}
    \WOP(\mu,\nu) &= \sqrt{ \vert m_\mu - m_\nu \vert^2 + W_2^2 ( T_{m_\mu}\#\bar{\mu}, T_{m_\nu}\#\bar{\nu} ) }\\
    &\le \sqrt{  \big[ \vert m_\mu - m_\eta \vert + \vert m_\eta - m_\nu \vert \big]^2 + \big[ W_2( T_{m_\mu}\#\bar{\mu}, T_{m_\eta}\#\bar{\eta} ) + W_2( T_{m_\eta}\#\bar{\eta},T_{m_\nu}\#\bar{\nu}) \big]^2 }\\
    &\le  \sqrt{ \vert m_\mu - m_\eta \vert^2 + W_2^2 ( T_{m_\mu}\#\bar{\mu}, T_{m_\eta}\#\bar{\eta} ) } + \sqrt{ \vert m_\eta - m_\nu \vert^2 + W_2^2 ( T_{m_\eta}\#\bar{\eta}, T_{m_\nu}\#\bar{\nu}) }\\
    &= \WOP(\mu,\eta) + \WOP(\eta,\nu).
\end{align*}
\end{proof}
By using a fundamental property of the Euclidean distance on $\R^d$, we can rewrite the $\WOP$ metric in a purely metric way.
On $\R^d$, for all $x,y\in \R^d$ and $a,b\in\R_+$,
    \begin{equation}\label{eq:flateq}
     |a(x-x_0) - b(y-x_0)|^2=a(a-b)|x-x_0|^2 + b(b-a)|y-x_0|^2+ab|x-y|^2, 
    \end{equation}
   and thus, the $\WOP$ metric satisfies
    \[
\WOP^2(\mu,\nu) :=  (m_\mu- m_\nu )^2 + \inf_{\pi\in\Pi\left(\bar\mu,\bar\nu\right)}\int c_{m_\mu,m_\nu}^{x_0}(x,y)\ud\pi(x,y)
    \]
    where
    \[
    c_{a,b}^{x_0}(x,y)=a(a-b)\|(x-x_0\|^2 + b(b-a)\|y-x_0\|^2+ab \|x-y\|^2.
    \]
    Note that the first two terms of $c_{a,b}^{x_0}$ do not depend on the coupling.
    In particular, this shows that the optimal coupling is the same as the one between the normalized measures $\bar{\mu}$ and $\bar{\nu}$ and so the geodesics are also the same (although at a different speed and with a different total mass) as the ones between the normalized measures.
    Now, setting
    \[
    M_{x_0}(\mu):=\int d^2(x,x_0)\ud\mu(x),
    \]
     we can rewrite $\WOP$ as
    \begin{equation}\label{eq:defbis}
    \WOP^2(\mu,\nu) =  (m_\mu- m_\nu )^2 + (m_\mu-m_\nu)(M_{x_0}(\mu)-M_{x_0}(\nu)) + m_\mu m_\nu W_2^2(\bar\mu,\bar\nu).
    \end{equation}
    \begin{remark}
    One can quickly check that the dependence of $\WOP$ on the reference point $x_0$ is only through the moments centered in $x_0$.
    In fact, the following equation holds for any two reference points $x_0$ and $y_0$
    \[
    \WOP_{x_0}^2(\mu,\nu)=\WOP_{y_0}^2(\mu,\nu)+(m_\mu-m_\nu)[M_{x_0}(\mu)-M_{y_0}(\mu)+M_{y_0}(\nu)-M_{x_0}(\nu)].
    \]
    \end{remark}
    
A significant difference between the $\WOP$ metric and the $\HK$ metric is their scaling with the total mass of the measures.
While $\HK$ scales proportionally to the square-root of the mass, $\WOP$ scales proportionally to mass.
\begin{property}[$1$-homogeneity]
The $\WOP$ metric satisfies the following 1-homogeneity property, i.e.,
for all $a \in \R_+$,
\[
 \WOP(a\mu,a\nu) = a\WOP(\mu,\nu).
\]  
\end{property}

\begin{proof}
The proof is immediate using the reformulation of the metric \eqref{eq:defbis}, using that $m_{a\mu}=a m_{\mu}$ and $M_{x_0}(a\mu)=aM_{x_0}(\mu)$.
\end{proof}

In particular, when $\mu$ is the null measure $0_\mc{M}$, this property ensures that the geodesic between a measure and the null measure lies on the rescaling of the original measure.

We now highlight the main property of this metric: it extends the Wasserstein metric to all finite positive finite measures.
This result is formulated in the following theorem.
\begin{theorem}[$\WOP$ extends the Wasserstein distance]
    The $\WOP$ metric satisfies that
    \begin{itemize}
        \item for all $\mu,\nu\in\mc P_2(\R^d)$, and $a\in\R_+$,
        \[
        \WOP(a\mu,a\nu)=a W_2(\mu,\nu),
        \]
        \item and for all $\mu\in\mc P_2(\R^d)$, and $a_0,a_1\in\R_+$,
        \[
        \WOP^2(a_0\mu,a_1\mu)=(a_0-a_1)^2(1+M_{x_0}(\bar\mu)).
        \]
    \end{itemize}
\end{theorem}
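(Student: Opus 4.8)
The plan is to reduce both identities to the purely metric reformulation~\eqref{eq:defbis}, which is the most convenient starting point since it displays the dependence on the total masses explicitly and isolates the Wasserstein contribution as the single term $m_\mu m_\nu W_2^2(\bar\mu,\bar\nu)$.

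For the first identity I would first establish the restriction property. When $\mu,\nu\in\mc P_2(\R^d)$ are probability measures we have $m_\mu=m_\nu=1$ and $\bar\mu=\mu$, $\bar\nu=\nu$, so the first term of~\eqref{eq:defbis} is $(m_\mu-m_\nu)^2=0$, the cross term $(m_\mu-m_\nu)(M_{x_0}(\mu)-M_{x_0}(\nu))$ also vanishes, and the last term collapses to $W_2^2(\mu,\nu)$; hence $\WOP(\mu,\nu)=W_2(\mu,\nu)$. The general claim then follows immediately from the $1$-homogeneity property proved just above, which gives $\WOP(a\mu,a\nu)=a\,\WOP(\mu,\nu)=a\,W_2(\mu,\nu)$.

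For the second identity I would again substitute into~\eqref{eq:defbis}, this time for the pair $(a_0\mu,a_1\mu)$. The three observations that drive the computation are: the masses are $m_{a_0\mu}=a_0$ and $m_{a_1\mu}=a_1$ (using $m_\mu=1$); both normalizations coincide with $\mu$, i.e. $\overline{a_0\mu}=\overline{a_1\mu}=\mu=\bar\mu$, so the Wasserstein term vanishes because $W_2(\mu,\mu)=0$; and $M_{x_0}$ is $1$-homogeneous in the measure, so $M_{x_0}(a_i\mu)=a_i M_{x_0}(\bar\mu)$. Plugging these in leaves $(a_0-a_1)^2+(a_0-a_1)\cdot(a_0-a_1)M_{x_0}(\bar\mu)$, which factors as $(a_0-a_1)^2\big(1+M_{x_0}(\bar\mu)\big)$, as claimed.

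Neither part presents a genuine obstacle: both are direct substitutions once~\eqref{eq:defbis} and $1$-homogeneity are available. The only point needing minor care is the bookkeeping of the total masses together with the scaling $M_{x_0}(a\mu)=a M_{x_0}(\mu)$; it is precisely this homogeneity of $M_{x_0}$ that makes the cross term in~\eqref{eq:defbis} combine with $(a_0-a_1)^2$ to produce the advertised factor $1+M_{x_0}(\bar\mu)$.
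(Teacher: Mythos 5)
Your proof is correct, and it is exactly the argument the paper intends: the theorem is stated without proof as a direct consequence of the reformulation~\eqref{eq:defbis} together with the $1$-homogeneity property established immediately before it, which is precisely how you proceed. The only cosmetic caveat is the degenerate case $a_i=0$, where the paper's convention gives $\overline{a_i\mu}=\delta_{x_0}$ rather than $\mu$; your conclusion is unaffected since the Wasserstein term carries the prefactor $a_0a_1=0$ and $M_{x_0}(0_{\mc M})=0$.
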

In particular, the set of probability measures is geodesically convex for the $\WOP$ metric.

\subsection{Topology of $\WOP$}

Before describing the topology induced by the $\WOP$ metric, we start the section with an observation.
Given a metric $D$ on $\mc{M}(\R^d)$ that extends the Wasserstein metric, one can expect that for any sequence $a_n \in \R_+$ converging to $0$ and any sequence $\mu_n$ of probability measures, $a_n\mu_n \to  0_{\mc{M}}$ for the topology of $D$.
However, if $D$ satisfies some homogeneity property (i.e $D(a\mu, a\nu)\approx a^p D(\mu,\nu)$), then taking $a_n \to 0$ and two sequences $\mu_n$ and $\nu_n$ of probability measure,
\[  
D(a_n \mu_n, a_n \nu_n) \approx a_n^p W_2(\mu_n,\nu_n).
\]
Thus, if $W_2(\mu_n,\nu_n)$ tends to $+\infty$ faster than $a_n^{p}$ converges to $0$, the measures $a_n \mu_n$ and $a_n \nu_n$ cannot both converge to $0_\mc{M}$. To avoid this issue, we introduce a subset of the positive measures so that $W_2(\mu_n,\nu_n)$ remains bounded.
\begin{definition}[Positive measure with control on the second-order moment]
For $K>0$ and a reference point $x_0$ for the metric $\WOP_{x_0}$, we define $\mc{M}_K(\R^d)$ as the set of positive measures such that
\[
M_{x_0}(\mu) \le Km_\mu.
\]
\label{def:Mk}
\end{definition}

On that space, the observation above cannot happen and $\WOP$ is a complete metric.
\begin{property}[Completeness]
The space $(\mc{M}_K(\R^d),\WOP)$ is complete
\end{property}

\begin{proof}
Let $\mu_n$ be a Cauchy sequence in $\mc{M}_K(\R^d)$. By completeness of the metrics $\vert\, . \,\vert$ and $W_2$, we have that $m_{\mu_n}$ converges to $m \in \R_+$ and $T_{m_{\mu_n}}\#\bar{\mu}_n$ converges to $\lambda \in \mc{P}_2(\R^d)$. From this we can show that $\mu_n$ converges.
\begin{enumerate}
    \item First case : suppose that $m > 0$, then $T_m(x)$ is bijective with $T_m^{-1}(x)=\frac{x-x_0}{m}+x_0$ then can easily satisfy that $\mu_n$ converges to $\mu=m.(T_m)^{-1} \#\lambda$. It remains to show that $\mu \in \mc{M}_K(\R^d)$. Using that the convergence in $(\mc{P}_2(\R^d),W_2)$ implies the convergence of the second order moment (see \eqref{eq:W2weakConv}) we have
    \begin{align*}
    M_{x_0}(\mu) &= m\int \vert T_m^{-1}(x) - x_0 \vert^2 d\lambda\\
    &= \frac{1}{m} \int \vert x-x_0 \vert^2 d\lambda\\
    &= \lim_{n} \frac{1}{m}  \int \vert x-x_0 \vert^2 dT_{m_{\mu_n}}\#\bar{\mu}_n\\
    &= \lim_{n} \frac{m_{\mu_n}}{m}  M_{x_0}(\mu_n)\\
    & \le \lim_{n} K \frac{m_{\mu_n}^2}{m}\\
    &= K m.
    \end{align*}
    
    \item Second case : $m=0$ then we want to show that $\mu_n$ converges to $O_\mc{M}$, for this we show that the $W_2$ limit $\lambda=\lim\limits_{n} T_{m_n}\#\bar{\mu}_n$ is equal to $\delta_{x_0}$. Using the triangle inequality we have
    \[
    W_2(\lambda,\delta_{x_0}) \le W_2(\lambda,T_{m_{\mu_n}}\#\bar{\mu}_n ) + W_2(T_{m_{\mu_n}}\#\bar{\mu}_n ,\delta_{x_0}),
    \]
    the term $W_2(\lambda,T_{m_{\mu_n}}\#\bar{\mu}_n )$ converges to $0$. For the second term we use that $\mu_n$ is a sequence of $\mc{M}_K(\R^d)$ and so we have an upper bound
    \begin{align*}
    W_2(T_{m_{\mu_n}}\#\bar{\mu}_n,\delta_{x_0}) & = \int \vert x-x_0 \vert^2 \ud T_{m_{\mu_n}}\#\bar{\mu}_n(x)\\
    & = \frac{1}{m_{\mu_n}}\int \vert T_{m_{\mu_n}}(x)-x_0 \vert^2 \ud\mu_n(x)\\
    &= m_{\mu_n}\int \vert x-x_0 \vert^2 \ud\mu_n(x)\\
    & \le Km_{\mu_n}^2.
   \end{align*}
    Thus $W_2(T_{m_{\mu_n}}\#\bar{\mu}_n,\delta_{x_0})$ converges to 0, which shows that $W_2(\lambda,\delta_{x_0}) = 0$, or equivalently that $\lambda=\delta_{x_0}$, and the Cauchy sequence $\mu_n$ converges  to $0_{\mc{M}}$.
\end{enumerate}
\end{proof}

In order to study the convergence associated with the $\WOP$ metric, let us recall the definition of the weak convergence of measures.
\begin{definition}[Weak convergence of measures]
For a sequence $\mu_n \in \mc{M}(\R^d), n\ge 1$, we say that $\mu_n$ weakly converges to $\mu$ if and only if for all $\phi \in \mc{C}_b(\R^d), $
\[
\int \phi \, \ud\mu_n \to \int \phi \, \ud\mu.
\]
We denote this convergence by $\mu_n \rightharpoonup \mu$.
\end{definition}

The 2-Wasserstein metric metrizes the weak convergence of measures in $\mc{P}_2(\R^d)$ together with the convergence of the second order moments, i.e.
for a sequence $(\mu_n)_{n\ge 1}$ in $\mc{P}_2(\R^d)$ and $\mu \in \mc{P}_2(\R^d)$,
\begin{equation}\label{eq:W2weakConv}
W_2(\mu_n,\mu) \to 0 \iff \left\{ \begin{array}{l} 
M_{x_0}(\mu_n) \to M_{x_0}(\mu)\\
\mu_n \rightharpoonup \mu
\end{array}\right..
\end{equation}
A similar property holds for $\WOP$.
\begin{property}[$\WOP$ metrizes the weak convergence of measures]\label{prop:WOPconv}
Let $\mu_n$ be a sequence in $\mc{M}_K(\R^d)$ and $\mu \in \mc{M}_K(\R^d)$, then 
\[
\WOP(\mu_n,\mu) \to 0 \iff \left\{ \begin{array}{l} 
M_{x_0}(\mu_n) \to M_{x_0}(\mu)\\
\mu_n \rightharpoonup \mu
\end{array}\right.
\]
\end{property}

\begin{proof}
Let $\mu_n$ be a sequence of $\mc{M}_K(\R^d)$ and let $\mu \in \mc{M}_K(\R^d)$. Then using the formulation $(\eqref{eq:defbis})$ we have 

\begin{equation}\label{eq:convergence}
 \WOP(\mu_n,\mu) \to 0 \iff \left\{\begin{array}{l}
        m_{\mu_n} \to m_\mu\\
        (m_{\mu_n}-m_\mu)(M_{x_0}(\mu_n)-M_{x_0}(\mu)) + m_{\mu_n} m_\mu W_2^2(\bar{\mu}_n,\bar{\mu}) \to 0
        \end{array}\right..
\end{equation}
We separate the discussion into two cases:\\
In the first case, we consider $m_\mu>0$. Then the equivalence \eqref{eq:convergence} becomes
    \begin{align*}
        \WOP(\mu_n,\mu) \to 0 & \underset{(a)}{\iff} \left\{\begin{array}{l}
        m_{\mu_n} \to m_\mu\\
        W_2(\bar{\mu}_n,\bar{\mu}) \to 0
        \end{array}\right.\\
        &\underset{(b)}{\iff} \left\{\begin{array}{l}
        m_{\mu_n} \to m_\mu\\
        \bar{\mu}_n  \rightharpoonup \bar{\mu}\\
        M_{x_0}(\bar{\mu}_n) \to M_{x_0}(\bar{\mu})
        \end{array}\right.\\
        &\underset{(c)}{\iff} \left\{\begin{array}{l}
        \mu_n \rightharpoonup \mu\\
        M_{x_0}(\mu_n) \to M_{x_0}(\mu)\\
        \end{array}\right.\\
    \end{align*}
    The equivalence (a) holds since $(m_{\mu_n}-m_\mu)(M_{x_0}(\mu_n)-M_{x_0}(\mu))$ converges to $0$ when $m_{\mu_n}$ converges to $m_\mu$.
    Indeed,
    \[ 
    \vert (m_{\mu_n}-m_\mu)(M_{x_0}(\mu_n)-M_{x_0}(\mu)) \vert \le \vert m_{\mu_n} - m_\mu \vert (M_{x_0}(\mu_n)+M_{x_0}(\mu)) \le \vert m_{\mu_n} - m_\mu \vert (K m_{\mu_n}+M_{x_0}(\mu)) \to 0
    \]
    The equivalence $(b)$ holds since $W_2$ metrizes the weak convergence of measure + convergence of moments.\\
    The equivalence $(c)$ is a direct consequence of the definition of weak convergence.\\
For the second case, consider $m_\mu=0$ and recall that $\mu_n\in\mc{M}_K(\R^d)$. 
Then the equivalence \eqref{eq:convergence} becomes
    \begin{align*}
        \WOP(\mu_n,0_{\mc{M}}) \to 0 & \underset{(d)}{\iff} 
        m_{\mu_n} \to 0\\
        &\underset{(e)}{\iff} \left\{\begin{array}{l}
        \mu_n \rightharpoonup 0_{\mc{M}}\\
        M_{x_0}(\mu_n) \to 0\\
        \end{array}\right.
    \end{align*}    
    The equivalence (d) hold true because $(m_{\mu_n}-m_\mu)(M_{x_0}(\mu_n)-M_{x_0}(\mu))+m_{\mu_n} m_\mu W_2(\bar{\mu}_n,\bar{\mu})$ converges to $0$ as $m_{\mu_n}$ converges to $0$. Indeed
    \[
    \vert (m_{\mu_n}-m_\mu)(M_{x_0}(\mu_n)-M_{x_0}(\mu))+m_{\mu_n} m_\mu W_2(\bar{\mu}_n,\bar{\mu}) \vert =  m_{\mu_n}M_{x_0}(\mu_n)
    \le Km_{\mu_n}^2 \to 0.
    \]
    For the equivalence $(e)$, the implication $(\Leftarrow)$ is direct with the definition of weak convergence.
    For the other implication $(\Rightarrow)$, let $\phi \in \mc{C}_b(\R^d)$ then we have
    \[ 
     \left\vert \int \phi d\mu_n \right\vert \le \Vert \phi \Vert_{\infty}m_{\mu_n} \to 0.
    \]
    Thus $\mu_n \rightharpoonup 0_{\mc{M}}$. And for the second-order moment, observe that
    \[
    M_{x_0}(\mu_n) \le Km_{\mu_n} \to 0,
    \]
which concludes the proof.
\end{proof}

\subsection{Dual formulation}
The $\WOP$ benefits from a dual formulation similar to the $W_2$ dual formulation metric described in \eqref{eq:W2_dual}.
\begin{theorem}[Dual formulation]
The dual formulation of the $\WOP$ metric is given by
\[
\WOP^2(\mu,\nu) = \sup\limits_{(\phi,\psi) \in \Gamma_{\mu,\nu}} \int \phi \ud\mu + \int \psi d\nu,
\]
where $\Gamma_{\mu,\nu}$ is the set of functions
\[
\Gamma_{\mu,\nu}=\left\{ (\phi,\psi) \in \mc{C}_b(\R^d)^2 \bigg\vert m_\mu\phi(x) + m_\nu\psi(y) \le ( m_\mu- m_\nu ) ^2 + \vert T_{m_\mu}(x)-T_{m_\nu}(y) \vert^2,   \mu \otimes \nu \text{-a.e.} \right\}.
\]
\end{theorem}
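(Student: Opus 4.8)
The plan is to deduce this dual formulation directly from the classical Kantorovich duality \eqref{eq:W2_dual} applied to the pushed-forward probability measures $T_{m_\mu}\#\bar\mu$ and $T_{m_\nu}\#\bar\nu$, and then to transport the potentials back to $\mu$ and $\nu$ through a change of variables by the affine maps $T_{m_\mu}$ and $T_{m_\nu}$. Since only the $W_2^2$ term in Definition~\ref{def:Wass} carries a supremum, the additive constant $(m_\mu-m_\nu)^2$ will simply be absorbed into the potentials. Throughout I assume $m_\mu,m_\nu>0$, so that $T_{m_\mu}$ and $T_{m_\nu}$ are affine homeomorphisms of $\R^d$ with inverses $T_{m_\mu}^{-1}(u)=(u-x_0)/m_\mu+x_0$.

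By Kantorovich duality, $W_2^2(T_{m_\mu}\#\bar\mu,T_{m_\nu}\#\bar\nu)$ equals the supremum of $\int\tilde\phi\,\ud(T_{m_\mu}\#\bar\mu)+\int\tilde\psi\,\ud(T_{m_\nu}\#\bar\nu)$ over $(\tilde\phi,\tilde\psi)\in\mc{C}_b(\R^d)^2$ with $\tilde\phi(u)+\tilde\psi(v)\le|u-v|^2$. I would then introduce the correspondence
\[
\phi:=\tfrac{1}{m_\mu}\,\tilde\phi\circ T_{m_\mu}+(m_\mu-m_\nu),\qquad \psi:=\tfrac{1}{m_\nu}\,\tilde\psi\circ T_{m_\nu}-(m_\mu-m_\nu),
\]
and check that it is a bijection from the $W_2$-feasible set onto $\Gamma_{\mu,\nu}$. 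Since $T_{m_\mu},T_{m_\nu}$ are homeomorphisms, $\phi,\psi\in\mc{C}_b$ iff $\tilde\phi,\tilde\psi\in\mc{C}_b$; and multiplying out the constraint gives
\[
m_\mu\phi(x)+m_\nu\psi(y)=\tilde\phi(T_{m_\mu}(x))+\tilde\psi(T_{m_\nu}(y))+(m_\mu-m_\nu)^2,
\]
so the defining inequality of $\Gamma_{\mu,\nu}$ is exactly $\tilde\phi(u)+\tilde\psi(v)\le|u-v|^2$ read at $u=T_{m_\mu}(x)$, $v=T_{m_\nu}(y)$.

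Next I would match the objectives, using the pushforward identity $\int\tilde\phi\,\ud(T_{m_\mu}\#\bar\mu)=\frac{1}{m_\mu}\int\tilde\phi\circ T_{m_\mu}\,\ud\mu$ and its analogue for $\nu$, which yield
\[
\int\phi\,\ud\mu+\int\psi\,\ud\nu=\int\tilde\phi\,\ud(T_{m_\mu}\#\bar\mu)+\int\tilde\psi\,\ud(T_{m_\nu}\#\bar\nu)+(m_\mu-m_\nu)^2,
\]
the constant arising from $(m_\mu-m_\nu)m_\mu-(m_\mu-m_\nu)m_\nu=(m_\mu-m_\nu)^2$. Taking suprema over the two bijectively identified feasible sets then gives $\sup_{\Gamma_{\mu,\nu}}=(m_\mu-m_\nu)^2+W_2^2(T_{m_\mu}\#\bar\mu,T_{m_\nu}\#\bar\nu)=\WOP^2(\mu,\nu)$.

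Two points deserve care. Because the potentials are continuous, the $\mu\otimes\nu$-a.e. constraint is equivalent to the inequality on $\mathrm{supp}\,\mu\times\mathrm{supp}\,\nu$, which is exactly what any competing plan sees; this is the reason relaxing an everywhere constraint to the a.e. one does not change the supremum, and it is the step I would justify most explicitly (via integrating the constraint against an optimal plan for the upper bound). The main obstacle is the degenerate regime $m_\mu=0$ or $m_\nu=0$, where $T$ collapses to the constant map $x_0$ and is no longer invertible, so the bijection above breaks down: the case $\WOP^2(0_{\mc M},\nu)$ must be treated directly by reducing to the one-variable dual problem, and there the constraint has to be read as holding for all $x,y$ rather than $\mu\otimes\nu$-a.e., since otherwise the null factor makes it vacuous and the supremum infinite.
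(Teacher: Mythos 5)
Your argument is correct, and at its core it is the same reduction as the paper's: apply the Kantorovich duality \eqref{eq:W2_dual} to $W_2^2(T_{m_\mu}\#\bar\mu,T_{m_\nu}\#\bar\nu)$ and pull the potentials back through the affine maps $T_{m_\mu},T_{m_\nu}$. The packaging differs: the paper first lifts the problem to $\R_+\times\R^d$, writing $\WOP^2(\mu,\nu)=W_2^2(\delta_{m_\mu}\otimes T_{m_\mu}\#\bar\mu,\ \delta_{m_\nu}\otimes T_{m_\nu}\#\bar\nu)$ so that the constant $(m_\mu-m_\nu)^2$ sits inside the transport cost and duality is invoked once on the product space, whereas you keep the constant outside and reabsorb it by shifting the potentials by $\pm(m_\mu-m_\nu)$; your identity $m_\mu\phi(x)+m_\nu\psi(y)=\tilde\phi(T_{m_\mu}(x))+\tilde\psi(T_{m_\nu}(y))+(m_\mu-m_\nu)^2$ and the matching bookkeeping in the objective are both correct, so the two routes are computationally equivalent, with yours being slightly more elementary (no product-space construction to justify). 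Where your write-up genuinely improves on the paper's is in the two points you flag at the end. First, the everywhere constraint of the Kantorovich dual versus the $\mu\otimes\nu$-a.e.\ constraint defining $\Gamma_{\mu,\nu}$: your correspondence is an injection of the $W_2$-feasible set into $\Gamma_{\mu,\nu}$ (giving the lower bound on the supremum) but not onto it, and the upper bound must be closed separately by integrating the a.e.\ constraint against a coupling in $\Pi(\bar\mu,\bar\nu)$ — exactly as you propose; the paper silently treats the a.e.\ set as if it were the everywhere-constrained one. Second, the degenerate case $m_\mu=0$ or $m_\nu=0$: you are right that the $\mu\otimes\nu$-a.e.\ constraint is then vacuous and the stated supremum is $+\infty$, so the theorem as written needs either the null measure excluded or the constraint read pointwise; the paper's proof does not address this. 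Make sure the final version actually carries out the two-sided inequality for the a.e.\ issue rather than asserting a bijection, but the plan is sound.
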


\begin{proof}
The idea of the proof is to write the $\WOP$ metric as a 2-Wasserstein metric on a higher dimension space, adding on dimension for the mass of the measure and taking  a Dirac measure on that mass.
Then, using the dual formulation (equation~\eqref{eq:W2_dual}), we infer a dual-like formulation of $\WOP$. 
Indeed,
\begin{align*}
\WOP^2(\mu,\nu) &= W_2^2(\delta_{m_\mu} \otimes T_{m_\mu}\#\bar\mu, \delta_{m_\nu} \otimes T_{m_\nu}\#\bar\nu )\\
&= \sup\limits_{ (\phi,\psi) \in \Phi}  \int \phi(\alpha,x) \ud \delta_{m_\mu}(\alpha)\otimes T_{m_\mu}\#\bar\mu(x) + \int \phi(\beta,y) \ud \delta_{m_\nu}(\beta)\otimes T_{m_\nu}\#\bar\nu(y)\\
&= \sup\limits_{ (\phi,\psi) \in \Phi}  \int \frac{\phi(m_\mu,T_{m_\mu}(x))}{m_\mu} \ud\mu(x) + \int \frac{\phi(m_\nu,T_{m_\nu}(y))}{m_\nu}\ud \nu(y).
\end{align*}
Where the space $\Phi$ is the set of functions $(\phi,\psi) \in \mc{C}_b(\R_+ \times \R^d)^2$ such that for all $(\alpha,x,\beta,y)$ defined $\delta_{m_\mu} \otimes T_{m_\mu}\#\bar\mu \otimes \delta_{m_\nu} \otimes T_{m_\nu}\#\bar\nu$-almost everywhere we have the inequality
\[
\phi(\alpha,x) + \psi(\beta,y) \le (\alpha-\beta)^2 + \vert x-y \vert^2, 
\]
or equivalently for all $(x,y)$ defined $\mu \otimes \nu$-almost everywhere, we have the inequality 
\[
\phi(m_\mu,T_{m_\mu}(x)) + \psi(m_\nu,T_{m_\nu}(y)) \le (m_\mu-m_\nu)^2 + \vert T_{m_\mu}(x)-T_{m_\nu}(y) \vert^2.
\]
Setting the functions $\tilde{\phi}(x)= \frac{\phi(m_\mu,T_{m_\mu}(x))}{m_\mu}$ and $\tilde{\psi}(y)=\frac{\phi(m_\nu,T_{m_\nu}(y))}{m_\nu}$, we finally obtain
\[ 
\WOP^2(\mu,\nu)=\sup \int \tilde{\phi}(x)\ud\mu(x) + \int \tilde{\psi}(y)d\nu(y),
\]
where the functions $(\tilde{\phi},\tilde{\psi}) \in \mc{C}_b(\R^d)^2$ satisfy
\[ 
m_\mu \tilde{\phi}(x) + m_\nu \tilde{\psi}(y) \le (m_\mu- m_\nu)^2 + \vert T_{m_\mu}(x) - T_{m_\nu}(y) \vert^2,
\] 
for $(x,y)$ defined $\mu \otimes \nu$-almost everywhere.
\end{proof}

\subsection{Geodesics of $\WOP$}
We recall that a metric space $(D,\mc{X})$ is geodesic if for every pair of point $(x,y)$ in $\mc{X}^2$ there exist a path $\sigma_t:[0,1] \longrightarrow \mc{X}$, called a \emph{(constant speed) geodesic}, starting at $x$, ending at $y$, such that
\[ 
D(\sigma_t,\sigma_s) = \vert t-s \vert D(x,y), \quad \forall t,s \in [0,1].
\]
In the case of $\big( W_2,\mc{P}_2(\R^d) \big)$ the geodesic are well known and can be expressed in terms of the Monge map between the two end points.
The following result provides a description of the geodesics of $\WOP$ and link them to the $W_2$ geodesics between the normalized measures.

\begin{property}[Geodesics for $\WOP$]\label{prop:geodesics}
Given $\mu_0,\mu_1 \in \mc{M}(\R^d)$, the geodesic is 
\begin{equation}\label{eq:Geod}
\mu_t =m_t.\hat\mu_{\lambda_t}, \quad t \in [0,1],
\end{equation}
where the mass is $m_t=(1-t)m_{\mu_0}+t m_{\mu_1}$, the probability measure $\hat\mu_t$ is the geodesic in $(\mc{P}_2(\R^d),W_2)$ between $\bar\mu_0$ and $\bar\mu_1$, and the parametrization is $\lambda_t = \frac{tm_{\mu_1}}{(1-t)m_{\mu_0}+tm_{\mu_1}}$.
In particular, the geodesics are independent on the reference point $x_0$.

Moreover, the space $(\mc{M}(\R^d),\WOP)$ is a geodesic space with positive curvature in the sense of Alexandrov. 
\end{property}

\begin{proof}
Let $\mu_0,\mu_1 \in \mc{M}(\R^d)$, using the fact that $(\R,\vert\,.\,\vert)$ and $(\mc{P}_2(\R^d),W_2)$ are geodesic spaces, using the definition of $\WOP$ we deduce that the geodesic in $(\mc{M}(\R^d),\WOP)$ between $\mu_0$ and $\mu_1$ is
\[
\mu_t=m_t.T_{m_t}^{-1}\#\eta_t,
\]
where $m_t=(1-t)m_{\mu_0} + t m_{\mu_1}$ and $\eta_t$ is the geodesic between $T_{m_{\mu_0}}\#\bar{\mu}_0$ and $T_{m_{\mu_1}}\#\bar{\mu}_1$ in $(\mc{P}_2(\R^d),W_2)$.\\
Moreover in $(\mc{P}_2(\R^d),W_2)$ we can describe the geodesic more in detail 
\[
\eta_t=P_t\#\pi, 
\]
with $P_t(x,y)=(1-t)x+ty$ and with $\pi \in \Pi_o(T_{m_{\mu_0}}\#\bar{\mu}_0,T_{m_{\mu_1}}\#\bar{\mu}_1)$.\\
Therefore,
\[
\pi = (T_{m_{\mu_0}},T_{m_{\mu_1}})\#\tilde{\pi},
\]
with $\tilde{\pi} \in \Pi_o(\bar{\mu}_0,\bar{\mu}_1)$.
Thus the geodesic can be rewritten as follow
\begin{align*}
\mu_t & = m_t.T_{m_t}^{-1}\#P_t\#(T_{m_{\mu_0}},T_{m_{\mu_1}})\#\tilde{\pi}\\
& = m_t.\big(T_{m_t}^{-1} \circ P_t \circ (T_{m_{\mu_0}},T_{m_{\mu_1}}) \big)\#\tilde{\pi}.
\end{align*}
Setting $\lambda_t = \frac{t m_{\mu_1}}{(1-t)m_{\mu_0} + t m_{\mu_1}}$, we can simplify $\big(T_{m_t}^{-1} \circ P_t \circ (T_{m_{\mu_0}},T_{m_{\mu_1}}) \big)$ :
\begin{align*}
\big(T_{m_t}^{-1} \circ P_t \circ (T_{m_{\mu_0}},T_{m_{\mu_1}}) \big)(x,y)  &= \frac{(1-t) \big[ m_{\mu_0}(x-x_0) + x_0 \big]  + t \big[ m_{\mu_1}(y-x_0)+ x_0 \big]  - x_0 }{(1-t)m_{\mu_0} + tm_{\mu_1}} + x_0\\
& = \frac{(1-t) m_{\mu_0} x + t m_{\mu_1} y}{(1-t)m_{\mu_0} + t m_{\mu_1}}\\
& = (1-\lambda_t)x + \lambda_t y\\
&= P_{\lambda_t}(x,y)
\end{align*}
Using that $P_t\#\tilde{\pi}$ is a geodesic between $\bar{\mu}_0$ and $\bar{\mu}_1$ which we denote $\hat{\mu}_t$, mean we can interpret the geodesic as a time reparametrization
\[
\mu_t  = m_t.P_{\lambda_t}\#\tilde{\pi}=m_t.\hat{\mu}_{\lambda_t}.
\]
The immediate advantage of having rewritten the geodesic as such is that there is no dependence on the reference point $x_0$.
Finally, the positive curvature of $(\mc{M}(\R^d),\WOP)$ is a direct consequence of the $0$ curvature of $(\R^d,\vert \, . \, \vert)$ and the positive curvature of $(\mc{P}_2(\R^d),W_2)$.
Indeed for all $t \in [0,1]$, for all $\mu_0,\mu_1 \in \mc{M}(\R^d)$, there exists $\mu_t$ which is the geodesic between $\mu_0$ and $\mu_1$ defined previously, such that for all $\eta \in \mc{M}(\R^d)$ we have :

\begin{align*}
\WOP^2(\mu_t,\eta) = & \vert m_{t}-m_\eta \vert^2 + W_2^2 (T_{m_{t}}\#\bar{\mu}_t,T_{m_\eta}\#\bar{\eta})\\
= & (1-t)\vert m_{\mu_0} - m_\eta \vert^2 + t\vert m_{\mu_1} - m_\eta \vert^2 -t(1-t)\vert m_{\mu_0} - m_{\mu_1} \vert^2 + W_2^2 (T_{m_{t}}\#\bar{\mu}_t,T_{m_\eta}\#\bar{\eta})\\
\ge &(1-t) \bigg[ \vert m_{\mu_0}- m_\eta \vert^2 + W_2^2(T_{m_{\mu_0}}\#\bar{\mu}_0,T_{m_\eta}\#\bar{\eta}) \bigg]  + t \bigg[ \vert m_{\mu_1} - m_\eta \vert^2 + W_2^2(T_{m_{\mu_1}}\#\bar{\mu}_1,T_{m_\eta}\#\bar{\eta}) \bigg]\\
& -t(1-t) \bigg[ \vert m_{\mu_0} - m_{\mu_1} \vert^2 + W_2^2 (T_{m_{\mu_0}}\#\bar{\mu}_0,T_{m_{\mu_1}}\#\bar{\mu}_1) \bigg] \\
= &(1-t)\WOP^2(\mu_0,\eta) + t\WOP^2(\mu_1,\eta) - t(1-t)\WOP^2(\mu_0,\mu_1).
\end{align*}
\end{proof}

\section{First order calculus on $\WOP$}\label{sec:Dynamic}

In this section, we first describe the dynamic interpretation of $\WOP$.
In a second time we describe its Riemannian-like interpretation: the tangent space as well as the gradient for a broad set of functional on $\mc{M}(\R^d)$ and the gradient-flow equation.
Finally, we give a direct way to extend functional from $\mc{P}_2(\R^d)$ into $\mc{M}(\R^d)$ while conserving a mass conservative gradient-flow. 

\subsection{Dynamic formulation}
Let us start with deriving an formal computation for a dynamic formulation.
Recall from~\eqref{eq:Geod} that the geodesic between $\mu_0$ and $\mu_1$ is given by
\[
\mu_t=m_t.\hat\mu_{\lambda_t},
\]
where $\lambda_t=\frac{tm_1}{(1-t)m_0+tm_1}$, $\hat\mu_t$ is the constant speed geodesic between $\bar\mu_0$ and $\bar\mu_1$, and $m_t=(1-t)m_0+tm_1$.
Then, for $v=\nabla\phi-{\rm Id}$ the tangent vector in $W_2$ pointing from $\bar\mu_0$ to $\bar\mu_1$,
\[
    \partial_t\mu_t=\partial_t m_t \cdot \hat\mu_{\lambda_t}+m_t\cdot\partial_t\lambda_t\cdot\partial_s\hat\mu_s|_{s=\lambda_t}.
    \]
    And thus,
\begin{align*}
 \partial_t\mu_t|_{t=0}&=(m_1-m_0)\bar\mu_0 - m_0\cdot\partial_{t|t=0}\lambda_t\cdot\dive (v\bar\mu_0)\\
    &=\left(\frac{m_1}{m_0}-1\right)\mu_0 - \frac{m_1}{m_0}\dive \left(v\mu_0\right)\\
    &=\frac{m'_0}{m_0}\mu_0-\dive \left(v\left(1+\frac{m'_0}{m_0}\right)\mu_0\right).
\end{align*}
In this equation, the term $m':=m'_0=m_1-m_0$ encodes the amount of mass created locally, and $u:=v\bigg(1+\frac{m'}{m_0}\bigg)$ is a vector field displacing $\bar\mu_0$.
These two quantities describes the direction of displacement from $\mu_0$ toward $\mu_1$ and can be thought as elements of tangent plane.
Since the distance between $\mu_1$ and $\mu_0$ in the tangent plane at $\mu_0$ is the same as their true distance, we expect that the distance to be defined in the tangent plane $\|\cdot\|_{\mu_0}$ at $\mu_0$ to satisfy
\[
\Vert(m',u)\Vert_{\mu_0}^2 =\WOP^2(\mu_0,\mu_1)=(m_1-m_0)^2 + \int \vert y-x \vert^2 \ud \pi(x,y),
\]
for some $\pi \in \Pi_o(T_{m_0}\#\bar{\mu}_0, T_{m_1}\#\bar{\mu}_1)$. 
By Brenier's theorem, we know that $\pi=(T_{m_0},T_{m_1})\#\big( \bar{\mu}_0\otimes \delta_{\{y=\nabla \phi(x) \} } \big)$, and thus
\begin{align*}
\Vert(m',u)\Vert_{\mu_0}^2 &= m'^2 + \int \vert T_{m_1}\big(\nabla \phi(x) \big) - T_{m_0}(x) \vert^2 d\bar{\mu}_0(x)\\
&= m'^2 + \int \vert m_1\big( \nabla\phi(x) -x_0 \big) - m_0(x-x_0) \vert^2 d\bar{\mu}_0(x)\\
&=m'^2 + \int \vert m_1 v + (m_1 - m_0)(x-x_0) \vert^2 d\bar{\mu}_0(x)\\
&=m'^2 + \int \vert m_0 u + m'(x-x_0) \vert^2 d\bar{\mu}_0(x).
\end{align*}
In other words, the tangent plane ought to consist of two elements: a real number $m'$ to account for the change of mass, and a vector field $u$ to account for the transport of the normalized mass; and the $\WOP$ metric integrate these two elements according to the formula above.
We discuss with more details tangent space in the Section~\ref{subsec:GF}.
These observations readily suggest the following dynamic formulation.

\begin{theorem}[Dynamic formulation]
The $\WOP$ metric satisfies that for all $\mu,\nu\in\mc{M}(\R^d)$, 
\[
\WOP^2(\mu,\nu)=\inf \bigg\{ \int |m'_t|^2+|m_t u_t + m'_t(x-x_0) |^2 \ud\bar\mu_t(x)\ud t \bigg\}
\]
where the infimum is taken among the triplet $(u_t,m_t,\mu_t)$ such that $m_t \in \mc{C}^1([0,1])$, $u_t:[0,1] \to L^2(\mu_t)$, $\mu_t : [0,1] \to \mc{M}(\R^d)$, and
\[
\begin{cases}
\partial_t \mu_t+ \dive \left( u_t \mu_t \right) =\frac{m'_t}{m_t}\mu_t  \label{eq:TransportSource}\\
    m_0=m_\mu, \quad  m_1= m_{\nu}, \nonumber \\
     \mu_0=\mu,\quad \mu_1=\nu.\nonumber
\end{cases}
\]
This equation is to be understood weakly, i.e to be verified against the set of test functions $\mc{C}^{\infty}(\R^d \times [0,1])$ with compact support.
\end{theorem}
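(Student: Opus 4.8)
The plan is to reduce the statement to the classical Benamou--Brenier theorem for $W_2$ through a time-dependent change of variables that absorbs the mass dynamics. Recall from Definition~\ref{def:Wass} that $\WOP^2(\mu,\nu)=(m_\mu-m_\nu)^2+W_2^2(\alpha,\beta)$ with $\alpha:=T_{m_\mu}\#\bar\mu$ and $\beta:=T_{m_\nu}\#\bar\nu$, both probability measures. The idea is that an admissible triplet $(u_t,m_t,\mu_t)$ for the dynamic problem corresponds, after normalizing and rescaling by $T_{m_t}$, to a pair made of a mass path $m_t$ in $\R$ and a standard (mass-preserving) Benamou--Brenier path $\rho_t:=T_{m_t}\#\bar\mu_t$ in $\mc{P}_2(\R^d)$ joining $\alpha$ to $\beta$; the point is that the two action functionals coincide \emph{exactly} under this correspondence. (Alternatively, one could lift to $\R^{d+1}$ and run Benamou--Brenier on $\delta_{m_\mu}\otimes\alpha$ and $\delta_{m_\nu}\otimes\beta$ as in the dual-formulation proof, but the direct change of variables produces the stated integrand most transparently.)

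First I would verify that the source term is exactly what is needed for the normalized measure $\bar\mu_t=\mu_t/m_t$ to satisfy the ordinary continuity equation $\partial_t\bar\mu_t+\dive(u_t\bar\mu_t)=0$; this is a short computation differentiating $\mu_t/m_t$ and using $\partial_t\mu_t=\frac{m'_t}{m_t}\mu_t-\dive(u_t\mu_t)$. Next, testing against $\phi\in C_c^\infty(\R^d\times[0,1])$, I would compute the equation satisfied by $\rho_t=T_{m_t}\#\bar\mu_t$. Since $T_{m_t}$ depends on $t$ through $m_t$, differentiating $\int\phi\,\ud\rho_t=\int\phi(T_{m_t}(x))\,\ud\bar\mu_t(x)$ yields two contributions --- one from the moving map, one from the flow of $\bar\mu_t$ --- which after the change of variable $y=T_{m_t}(x)$ combine into $\partial_t\rho_t+\dive(\tilde v_t\rho_t)=0$ with
\[
\tilde v_t(y)=m_t\,u_t\!\big(T_{m_t}^{-1}(y)\big)+\tfrac{m'_t}{m_t}(y-x_0).
\]
The same change of variable turns the spatial energy into $\int|\tilde v_t|^2\,\ud\rho_t=\int|m_t u_t+m'_t(x-x_0)|^2\,\ud\bar\mu_t$, so that the integrand of the theorem equals $|m'_t|^2+\int|\tilde v_t|^2\,\ud\rho_t$ pointwise in $t$.

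With this energy identity, both inequalities follow. For the lower bound, any admissible $(u_t,m_t,\mu_t)$ yields a mass path with $\int_0^1|m'_t|^2\,\ud t\ge(m_\mu-m_\nu)^2$ (Cauchy--Schwarz, equality for the affine path) and a continuity-equation path $(\rho_t,\tilde v_t)$ from $\alpha$ to $\beta$ with $\int_0^1\!\int|\tilde v_t|^2\,\ud\rho_t\,\ud t\ge W_2^2(\alpha,\beta)$ by the easy direction of Benamou--Brenier; summing shows the action is at least $\WOP^2(\mu,\nu)$. For the reverse bound I would exhibit the minimizer: take $m_t=(1-t)m_\mu+tm_\nu$ and let $\rho_t$ be the constant-speed $W_2$-geodesic from $\alpha$ to $\beta$ with its optimal velocity; inverting the correspondence ($\bar\mu_t=T_{m_t}^{-1}\#\rho_t$, $\mu_t=m_t\bar\mu_t$, and $u_t(x)=\frac{1}{m_t}\tilde v_t(T_{m_t}(x))-\frac{m'_t}{m_t}(x-x_0)$) produces an admissible triplet whose action is exactly $(m_\mu-m_\nu)^2+W_2^2(\alpha,\beta)=\WOP^2(\mu,\nu)$, consistent with the geodesic of Property~\ref{prop:geodesics}.

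The main obstacle is the rigorous handling of the time-dependent pushforward at the level of weak solutions: I must check that $u_t\in L^2(\mu_t)$ transfers to $\tilde v_t\in L^2(\rho_t)$ and back, that the correspondence is a genuine bijection between admissible curves in the two formulations, and in particular that the degenerate regime $m_t\to0$ (when $\mu$ or $\nu$ is the null measure, so $T_{m_t}$ is non-invertible and $1/m_t$ blows up in the formula for $u_t$) is controlled --- either by restricting to $m_\mu,m_\nu>0$ and using $1$-homogeneity together with approximation, or by observing that both sides degenerate gracefully when a mass vanishes at an endpoint. The transformation of the velocity field, where the moving rescaling contributes the extra drift $\frac{m'_t}{m_t}(y-x_0)$, is the computational heart of the argument.
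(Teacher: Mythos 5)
Your proposal is correct and follows essentially the same route as the paper: both reduce the problem to the Benamou--Brenier formulation of $W_2(T_{m_\mu}\#\bar\mu,T_{m_\nu}\#\bar\nu)$ via the time-dependent rescaling $\rho_t=T_{m_t}\#\bar\mu_t$, with the velocity transformation producing exactly the extra drift $\frac{m'_t}{m_t}(y-x_0)$ that the paper also obtains. Your version is if anything more careful, since you test the pushforward weakly and make both inequalities explicit (Cauchy--Schwarz on the mass path plus the easy direction of Benamou--Brenier for the lower bound, the explicit geodesic for the upper bound), whereas the paper performs the change of variables at the level of densities and treats the correspondence as an identification of the two infima.
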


\begin{proof}
Using the $W_2$ dynamic formulation in the definition of $\WOP$, we obtain
\begin{align*}
\WOP^2(\mu_0,\mu_1)&=(m_0-m_1)^2+W_2^2(T_{m_0}\#\bar\mu_0, T_{m_1}\#\bar\mu_1)\\
&=\inf\left\{ \int |m'_t|^2+\int|v_t|^2\ud\nu_t\ud t:\,\nabla m_t'=0, \partial_t \nu_t+\dive (v_t\nu_t)=0, \nu_i=T_{m_i}\#\bar\mu_i,i=1,2\right\}\\
&=\inf\left\{ \int |m'_t|^2+\int|v_t\circ T_{m_t}|^2\ud\bar\mu_t\ud t:\,\nabla m_t'=0, \partial_t \nu_t+\dive (v_t\nu_t)=0, \nu_i=T_{m_i}\#\mu_i,i=1,2\right\}
\end{align*}
For each $\nu_t$, we set $\mu_t=m_t\cdot T^{-1}_{m_t}\#\nu_t=m_t^{d+1}\cdot \nu_t\circ T_{m_t}$.
Thus, $\nabla\mu_t=m_t^{d+2}\nabla\nu_t\circ T_{m_t}$ and
\begin{align*}
    \partial_t\mu_t&=(d+1)m_t^dm'_t \nu_t\circ T_{m_t}+ m_t^{d+1}\partial_t(\nu_t\circ T_{m_t})\\
    &=\frac{(d+1)}{m_t}m'_t\mu_t+m_t^{d+1}\left[(\partial_t\nu_t)\circ T_{m_t}+ \langle\partial_t T_{m_t},\nabla\nu_t\circ T_{m_t}\rangle \right]\\
    &=\frac{(d+1)}{m_t}m'_t\mu_t+m_t^{d+1}(\partial_t\nu_t)\circ T_{m_t}+ m_t^{-1} \langle \partial_t T_{m_t},\nabla\mu_t\rangle \\
\end{align*}
On the other hand,
\begin{align*}
    (\partial_t\nu_t)\circ T_{m_t}&=-\dive (v_t\nu_t)\circ T_{m_t}\\
    &=-\dive (v_t)\circ T_{m_t}\cdot\nu_t\circ T_{m_t}-\langle v_t\circ T_{m_t},\nabla\nu_t\circ T_{m_t}\rangle\\
    &=-m_t^{-(d+1)}\dive (v_t)\circ T_{m_t}\cdot\mu_t - m_t^{-(d+2)}\langle v_t\circ T_{m_t},\nabla\mu_t\rangle\\
    &=-m_t^{-(d+2)}\dive (v_t\circ T_{m_t})\cdot\mu_t - m_t^{-(d+2)}\langle v_t\circ T_{m_t},\nabla\mu_t\rangle
\end{align*}
so we get, setting $m_t^{-1} v_t\circ T_{m_t}=\tilde{u}_t$
\begin{align*}
\partial_t\mu_t&=\frac{(d+1)}{m_t}m'_t\mu_t -m_t^{-1}\dive (v_t\circ T_{m_t})\cdot\mu_t-m_t^{-1}\langle v_t\circ T_{m_t},\nabla\mu_t\rangle + m_t^{-1} \langle \partial_t T_{m_t},\nabla\mu_t\rangle \\
&=\frac{(d+1)}{m_t}m'_t\mu_t -\dive ( \tilde{u}_t)\cdot\mu_t -\langle \tilde{u}_t,\nabla\mu_t \rangle+ m_t^{-1} \langle \partial_t T_{m_t},\nabla\mu_t\rangle \\
&=\frac{(d+1)}{m_t}m'_t\mu_t -\dive (\tilde{u}_t\mu_t)+ m_t^{-1} \langle \partial_t T_{m_t},\nabla\mu_t\rangle \\
&=\frac{m'_t}{m_t}\mu_t -\dive (\tilde{u}_t\mu_t) + m_t^{-1}\dive (\partial_t T_{m_t}\mu_t)\\
&=\frac{m'_t}{m_t}\mu_t -\dive \left( \big( \tilde{u}_t-m_t^{-1}\partial_t T_{m_t} \big)\mu_t \right)\\
&=\frac{m'_t}{m_t}\mu_t -\dive \left( \big(  \tilde{u}_t-\frac{m'_t}{m_t} ( x-x_0 ) \mu_t \big)\right)\\
\end{align*}
Therefore
\[
\WOP^2(\mu_0,\mu_1)=\inf\left\{ \int |m'_t|^2+\int |m_tu_t|^2 \ud\bar{\mu}_t\ud t:\,\nabla m_t'=0, \partial_t \mu_t + \dive \left( \big( \tilde{u}_t-\frac{m'_t}{m_t}(x-x_0) \big)\mu_t \right) =\frac{m'_t}{m_t}\mu_t  \right\}.
\]
Setting $u_t=\tilde{u}_t-\frac{m'_t}{m_t} ( x-x_0 )$, we get the result.
\end{proof}

\subsection{Gradient flows}\label{subsec:GF}

As mentioned in the previous section, the space $\big(\mc{P}_2(\R^d),W_2 \big)$ has a Riemannian-like structure where the tangent space $\mathbf{T_{\mu} \mc{P}_2(\R^d)}$ at each measure $\mu$ is a subset of vector fields in $L^2(\mu)$. 
More precisely, it is defined by
\[    
\mathbf{T_\mu \mc{P}_2(\R^d)}=\overline{\left\{ \nabla \phi \bigg\vert \phi \in \mc{C}^{\infty}_c(\R^d) \right\}}^{L^2(\mu)}.
\]
We refer the reader to \cite[Chapter 8.4]{Ambrosio2005} for more details.

Inspired by the observation of the previous section, and the dynamic formulation, the space $\big(\mc{M}(\R^d),\WOP \big)$ can also be endowed with a Riemannian-like structure.

\begin{definition}[Tangent space]
    We define the tangent space of $\mc{M}(\R^d)$ at $\mu$ by
    \[
    \mathbf{T_\mu \mc{M}(\R^d)} := \mathbf{T_{\bar{\mu}} \mc{P}_2(\R^d)} \times \R.
    \]
    For a tangent vector $(u,m') \in \mathbf{T_\mu \mc{M}(\R^d)}$, there corresponds a geodesic $(\mu_t)_{t\in[0,1]}$ emanating from $\mu_0=\mu$ such that
\begin{equation}\label{eq:taneq}
\partial_t \mu_t \vert_{t=0} = \frac{m'}{m_\mu}\mu - \dive(u \mu).
\end{equation}
We endow $\mathbf{T_\mu \mc{M}(\R^d)}$ with the scalar product defined by
\begin{equation}\label{eq:ScalarProd}
\langle (u_1,m'_1),(u_2,m'_2) \rangle_\mu = m'_1 m'_2 +\int \langle m_\mu u_1(x) +m'_1 (x-x_0), m_\mu u_2(x) + m'_2(x-x_0) \rangle d\bar{\mu}(x).
\end{equation}
\end{definition}

Using the tangent space, we can mimic the chain rule formula on Riemannian manifold to define a gradient on the $\WOP$ space and a gradient flow of a functional.

Recall that on a Riemannian manifold $(\mc{M},g)$ the gradient flow of a smooth functional $F:\mc{X}\longrightarrow\R$ is a curve $(x_t)_{t\ge 0}$ following the direction from the negative gradient of $F$ at each point.
It is thus characterized by the equation
\[ 
\partial_t x_t = -\nabla F( x_t ),\quad \forall t \ge 0.
\] 
\begin{definition}[$\WOP$ gradient]
    For a functional $F:\mc{M}(\R^d)\to\R$, the $\WOP$ gradient of $F$ at a point $\mu$ is defined as the $\nabla_\WOP F(\mu)\in\mathbf{T_\mu \mc{M}(\R^d)}$ such that for all locally Lipchitz path $(\mu_t)_{t\in[0,1]}$ such that $\mu_0=\mu$ and $\partial_t \mu_t \vert_{t=0} = \frac{m'}{m_\mu}\mu - \dive(u \mu)$,
    \begin{equation}
    \frac{\ud }{\ud t}F(\mu_t)|_{t=0}=\left\langle\nabla_\WOP F(\mu),(m',u)\right\rangle_\mu.\label{eq:WOPchainrule}
    \end{equation}
    A continuous path $(\mu_t)_{t\ge0}$ valued in $\mc{M}(\R^d)$ is a gradient flow of $F$ if $\nabla_\WOP(F)(\mu_t)=(m'_t,u_t)$ and $(\mu_t)_{t\ge0}$ is a weak solution of
    \[
    \partial_t\mu_t=\frac{m'_t}{m_{\mu_t}}\mu_t+\dive(u_t\mu_t).
    \]
\end{definition}

In order to characterize gradient flows on the $\WOP$ space, let us consider functionals $F:\mc{M}(\R^d) \to \R$ such that for $\mu\in\mc{M}(\R^d)$, their exists a function a function $\frac{\delta F}{\delta \mu} (\mu)\in \mc{C}^1(\R^d)$ called the \emph{first variation} of $F$ at $\mu$ such that for all $\nu \in \mc{M}(\R^d)$ 
\begin{equation}\label{eq:WOPfirstVariation}
    F(\nu) = F(\mu) +  \int \frac{\delta F}{\delta \mu}(\mu) \ud (\nu-\mu) + o\big(\WOP(\mu, \nu)\big).
\end{equation}
\begin{theorem}[$\WOP$-gradient formula]\label{thm:Wgradient}
Let $F$ be a functional on $\mc{M}(\R^d)$ satisfying~\eqref{eq:WOPfirstVariation} and denote by $\frac{\delta F}{\delta \mu}(\mu)$ its first variation.
Suppose $\|\nabla\frac{\delta F}{\delta\mu}(\mu)(x)\|\lesssim 1+|x|$.
Then the $\WOP$ gradient of $F$, $\nabla_{\WOP}F(\mu) = (u_F(\mu),m'_F(\mu))$ satisfies
\begin{align}\label{eq:WOPgrad}
& m'_{F}(\mu) = \int \frac{\delta F}{\delta \mu}(\mu) \ud\bar{\mu} -  \int \langle \nabla\frac{\delta F}{\delta \mu}(\mu) , (x-x_0) \rangle \ud\bar{\mu},\\
& u_{F}(\mu) = \frac{1}{m_\mu}\left[ \nabla\frac{\delta F}{\delta \mu}(\mu)- m'_{F}(\mu)(x-x_0)\right]. 
\end{align}
In particular, the gradient flow $(\nu_t)_{t\ge 0}$ of $F$ satisfies the weak partial differential equation - i.e. against test functions in $\mc{C}_c^{\infty}(\R^d\times \R^\star_+)$,
\begin{equation}\label{eq:FG}
\partial_t \nu_t = -\frac{m'_F(\nu_t)}{m_{\nu_t}}\nu_t+\dive \big( u_F(\nu_t)\nu_t \big).
\end{equation}
\end{theorem}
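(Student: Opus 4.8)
The goal is to compute the $\WOP$ gradient of $F$ from its first variation, then read off the gradient flow PDE. The strategy is to use the defining chain-rule identity \eqref{eq:WOPchainrule}: for any admissible direction $(m',u)\in\mathbf{T_\mu\mc{M}(\R^d)}$, I want to match the left-hand side $\frac{\ud}{\ud t}F(\mu_t)|_{t=0}$ (computed from the first variation) with the right-hand side $\langle \nabla_\WOP F(\mu),(m',u)\rangle_\mu$ (given by the scalar product \eqref{eq:ScalarProd}). Since the identity holds for \emph{all} directions, matching coefficients of $m'$ and of $u$ separately will pin down the two components of the gradient.

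\emph{Step 1: differentiate $F$ along a path.} Fix a direction $(m',u)$ and the associated geodesic $\mu_t$ with $\partial_t\mu_t|_{t=0}=\frac{m'}{m_\mu}\mu-\dive(u\mu)$, as in \eqref{eq:taneq}. Plugging $\nu=\mu_t$ into the first-variation formula \eqref{eq:WOPfirstVariation} and differentiating at $t=0$ gives
\[
\frac{\ud}{\ud t}F(\mu_t)\Big|_{t=0}=\int \frac{\delta F}{\delta\mu}(\mu)\,\ud\big(\partial_t\mu_t|_{t=0}\big)
=\int \frac{\delta F}{\delta\mu}(\mu)\left[\frac{m'}{m_\mu}\mu-\dive(u\mu)\right].
\]
Using $\int\frac{\delta F}{\delta\mu}\,\frac{m'}{m_\mu}\,\ud\mu=m'\int\frac{\delta F}{\delta\mu}\,\ud\bar\mu$ and integrating the divergence term by parts (this is where the growth hypothesis $\|\nabla\frac{\delta F}{\delta\mu}(\mu)(x)\|\lesssim 1+|x|$ is needed, to justify that the boundary terms vanish against the $L^2(\mu)$ integrability of $u$ and the second-moment control) yields
\[
\frac{\ud}{\ud t}F(\mu_t)\Big|_{t=0}=m'\int\frac{\delta F}{\delta\mu}(\mu)\,\ud\bar\mu+m_\mu\int\langle\nabla\tfrac{\delta F}{\delta\mu}(\mu),u\rangle\,\ud\bar\mu.
\]

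\emph{Step 2: expand the scalar product and match.} Writing out $\langle(u_F,m'_F),(u,m')\rangle_\mu$ from \eqref{eq:ScalarProd} and expanding the inner product gives a term proportional to $m'_F m'$, a mixed term pairing $m'_F$ with $u$ and $m'$ with $u_F$ through the factor $(x-x_0)$, and a term $m_\mu^2\int\langle u_F,u\rangle\ud\bar\mu$. The plan is to group these into a coefficient of $m'$ and a linear functional of $u$, then equate with Step 1. Matching the $u$-linear part gives $m_\mu^2 u_F+m_\mu m'_F(x-x_0)=m_\mu\nabla\frac{\delta F}{\delta\mu}(\mu)$, which rearranges into the stated formula for $u_F$ once $m'_F$ is known. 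Substituting that expression for $u_F$ back into the coefficient of $m'$ and equating with $\int\frac{\delta F}{\delta\mu}\ud\bar\mu$ produces, after the $\int\langle\nabla\frac{\delta F}{\delta\mu},(x-x_0)\rangle\ud\bar\mu$ term appears from the mixed coupling, exactly the formula \eqref{eq:WOPgrad} for $m'_F$. Finally, the gradient-flow PDE \eqref{eq:FG} is immediate by inserting $\nabla_\WOP F(\nu_t)=(u_F,m'_F)$ into the evolution equation $\partial_t\mu_t=-\frac{m'_t}{m_{\mu_t}}\mu_t+\dive(u_t\mu_t)$ defining a $\WOP$ gradient flow.

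\emph{Main obstacle.} The delicate point is Step 1's integration by parts together with the simultaneous matching in Step 2: the two components $m'_F$ and $u_F$ are coupled through the $(x-x_0)$ cross-terms in the scalar product, so one cannot read them off independently. The clean resolution is to solve the $u$-equation for $u_F$ in terms of the still-unknown $m'_F$, substitute into the $m'$-equation, and verify that the $(x-x_0)$-weighted terms reorganize into precisely $\int\langle\nabla\frac{\delta F}{\delta\mu},(x-x_0)\rangle\ud\bar\mu$; checking that this substitution is consistent (i.e. that the same $m'_F$ makes both equations hold for all $(m',u)$) is the crux, and it hinges on the algebraic identity \eqref{eq:flateq} already underlying the metric. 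The analytic hypothesis $\|\nabla\frac{\delta F}{\delta\mu}(\mu)\|\lesssim 1+|x|$ must be used to guarantee $\nabla\frac{\delta F}{\delta\mu}(\mu)\in L^2(\bar\mu)$ (so that $u_F$ is a legitimate tangent vector) and to legitimize the integration by parts; verifying these integrability facts is the routine-but-necessary technical overhead.
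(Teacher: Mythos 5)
Your proposal is correct and follows essentially the same route as the paper: differentiate $F$ along a geodesic via the first variation to obtain $\frac{\ud}{\ud t}F(\mu_t)|_{t=0}=m'\int\frac{\delta F}{\delta\mu}\,\ud\bar\mu+m_\mu\int\langle\nabla\frac{\delta F}{\delta\mu},u\rangle\,\ud\bar\mu$, then identify $(u_F,m'_F)$ by matching against the scalar product \eqref{eq:ScalarProd}, with the $(x-x_0)$ cross-terms cancelling exactly as you describe. The only cosmetic difference is that you pair $\frac{\delta F}{\delta\mu}$ with the continuity-equation form of $\partial_t\mu_t$ and integrate by parts, whereas the paper computes the same limit through the pushforward representation $\bar\mu_t=({\rm Id}+tu)\#\bar\mu$ and the Leibniz rule, using the growth hypothesis at the same point you do.
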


\begin{proof}
The gradient of $F$ at $\mu$ is defined as the quantity  $\nabla_{\WOP} F(\mu)\in T_\mu\mc{P}_2(\R^d)$ such that the chain rules holds
\[
\frac{\ud}{\ud t} F(\mu_t) \vert_{t=0}, = \langle \nabla_{\WOP} F(\mu), (u,m') \rangle_{\mu},
\]
where $\mu_t$ is the geodesic at time $t$, starting from $\mu$ and going along the tangent vector represented by $(u,m')$, i.e. satisfying \eqref{eq:taneq}.
In fact, thanks to the assumption \eqref{eq:WOPfirstVariation}, it is enough to check \eqref{eq:WOPchainrule} for geodesics.
Using again the assumption \eqref{eq:WOPfirstVariation} on the first variation, we have
\begin{align*}
    \frac{\ud}{\ud t} F(\mu_t)  &=  \lim_{t\to 0}\frac{F(\mu_t)-F(\mu)}{t}\\
    &= \lim_{t\to 0}\frac{1}{t}\int \frac{\delta F}{\delta \mu}(\mu) \ud( \mu_t - \mu))  \\
    &= \lim_{t\to 0}\frac{m_{\mu_t}-m_\mu}{t} \int \frac{\delta F}{\delta \mu} (\mu)\ud\bar{\mu} + m_{\mu_t} \int \frac{\delta F}{\delta \mu}(\mu)\frac{1}{t}\ud(\bar{\mu}_t-\bar{\mu}).
\end{align*}
The second inequality hold using the assumption \eqref{eq:WOPfirstVariation} on the first variation and the fact that $\WOP(\mu,\mu_t)/t\to0$ as $t\to 0$ for geodesics.
For $t$ going to $0$, the term $\frac{m_{\mu_t}-m_\mu}{t}$ converges to $m'$. 
For the second term, $m_{\mu_t}$ converges to $m_\mu$ as a consequence of the topology of $\WOP$ (Property~\ref{prop:WOPconv}).
Then, observe that
\[ 
\lim\limits_{t \to 0}  \int \frac{\delta F}{\delta \mu}(\mu)\frac{1}{t}\ud(\bar{\mu}_t-\bar{\mu}) = \frac{\ud}{\ud t}\int  \frac{\delta F}{\delta \mu}(\mu)(x+tu) \ud\bar{\mu}(x) \bigg\vert_{t=0}
\]
Using the hypothesis that $\nabla \frac{\delta F}{\delta \mu}(\mu)$ has a linear growth  $\vert \nabla \frac{\delta F}{\delta \mu}(\mu) (x) \vert \lesssim 1+ \vert x \vert$, we can bound the following derivative in a neighborhood $[0,\eps[$ of $t=0$,
\[ 
\bigg\vert \frac{\partial}{\partial t} \bigg( \frac{\delta F}{\delta \mu}(\mu)(x+tu) \bigg) \bigg\vert = \big\vert \langle \nabla \frac{\delta F}{\delta \mu}(x+tu), u \rangle \big\vert \le 2\big\vert \nabla \frac{\delta F}{\delta \mu}(\mu)(x+tu)   \big\vert^2 +2 \vert u \vert^2  \lesssim 1 + \vert x \vert^2  + (1+\eps)\vert u \vert^2
\]
Since $u$ is in $L^2(\bar{\mu})$ and $\bar{\mu}$ admits a second-order moment, the Leibniz integral rule ensures
\[
\lim\limits_{t \to 0}  \int \frac{\delta F}{\delta \mu}(\mu)\frac{1}{t}\ud(\bar{\mu}_t-\bar{\mu}) =  \int \langle \nabla\frac{\delta F}{\delta \mu}(x+tu),u \rangle \ud \bar{\mu}\bigg\vert_{t=0} = \int \langle \nabla\frac{\delta F}{\delta \mu},u \rangle \ud \bar{\mu}.
\]
 To sum up,
 \begin{align*}
 \frac{\ud}{\ud t} F(\mu_t) \vert_{t=0} 
 &= m' \int \frac{\delta F}{\delta \mu} \ud \bar{\mu} + m_\mu\int \langle \nabla \frac{\delta F}{\delta \mu}(x),u\rangle \ud \bar{\mu}\\
 &= m'\bigg[  \int \frac{\delta F}{\delta \mu} \ud \bar{\mu} - \int \langle \nabla \frac{\delta F}{\delta \mu},(x-x_0) \rangle \ud\bar{\mu} \bigg] + \int \langle m_{\mu} u + m'(x-x_0), \nabla \frac{\delta F}{\delta \mu} \rangle \ud \bar{\mu}. 
 \end{align*}
Using the scalar product~\eqref{eq:ScalarProd}, we identify the variation of mass of the $\WOP$ gradient to be
\[
m'_{F}(\mu) = \int \frac{\delta F}{\delta \mu} \ud\bar{\mu} -  \int \langle \nabla\frac{\delta F}{\delta \mu} , (x-x_0) \rangle \ud\bar{\mu}, 
\]
and the vector field $u_{F}(\mu)$ to be defined by
\[
\nabla\frac{\delta F}{\delta \mu}(\mu)(x)=m_\mu u_{F}(\mu)(x) +m'_{F}(\mu)(x-x_0)
\]
i.e.
\[
 u_{F}(\mu)(x) = \frac{1}{m_\mu}\nabla\frac{\delta F}{\delta \mu}(\mu)(x)-\frac{m'_{F}(\mu)}{m_\mu}(x-x_0).
\]
\end{proof}

We now use the same visual interpretation of the $\WOP$ metric as in Figure \ref{fig:my_label}, and describe an intuitive characterization of the functionals for which the gradient flow satisfies a conservative partial differential equation, i.e for which the total mass $m_{\nu_t}$ is constant.
The Figure \ref{fig:my_label2} hints that the gradient of a functional $F$ must be \emph{orthogonal to the half-lines} $\{ aT_{\frac{1}{a}}\#\bar\mu \vert a > 0 \}$ for every measures $\mu$. This is equivalent to $F$ satisfying the mass invariant equation
\begin{equation}\label{eq:massInvariance}
F(\mu)=F(aT_{\frac{1}{a}\#{\mu}}), \quad \forall a>0, \; \forall \mu \in \mc{M}(\R^d).
\end{equation}

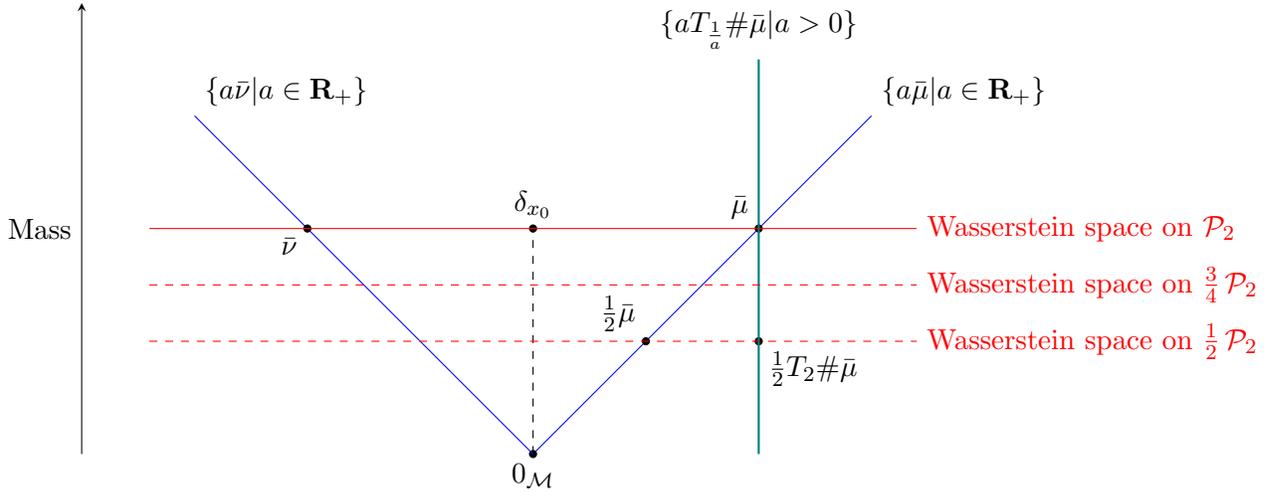
\begin{figure}[h]
    \centering
\begin{tikzpicture}[scale=3]
\draw node[circle,fill,scale=0.3] (a) at (0,1) {};
\draw node[anchor=south] (b) at (0,1) {$\delta_{x_0}$};
\draw[dashed] (0,0) -- (0,1);

\draw[blue] (1.5,1.5) -- (0,0);
\draw node[circle,fill,scale=0.3] () at (0,0) {};
\draw node[anchor=north] () at (0,0) {$0_{\mc M}$};

\draw node[circle,fill,scale=0.3] (a) at (1,1) {};
\draw node[anchor=south east] (b) at (1,1) {$\bar\mu$};
\draw node (a) at (1.5,1.5) {};
\draw node[anchor=south west] (b) at (1.5,1.5) {$\{a\bar\mu \vert a \in \R_+ \}$}; 
\draw node[circle,fill,scale=0.3] (c) at (1,0.5) {};
\draw node[anchor=north west] (c) at (1,0.5) {$\frac{1}{2}T_2\#\bar\mu$};
\draw node[circle,fill,scale=0.3] (d) at (0.5,0.5) {};
\draw node[anchor=south east] (d) at (0.5,0.5) {$\frac{1}{2}\bar\mu$};
 
\draw[thick,teal] (1,0) -- (1,1.75);
\draw node (a) at (1,1.75) {};
\draw node [anchor=south] (b) at (1,1.75) {$\{ aT_{\frac{1}{a}}\#\bar \mu \vert a > 0 \}$};

\draw[blue] (-1.5,1.5) -- (0,0);
\draw node[circle,fill,scale=0.3] (a) at (-1,1) {};
\draw node[anchor=north east] (b) at (-1,1) {$\bar\nu$};
\draw node (c) at (-1.5,1.5) {};
\draw node [anchor=south west] (d) at (-1.5,1.5) {$\{ a\bar \nu \vert a \in \R_+ \}$};

\draw[red] (-1.7,1) -- (1.7,1) node[anchor=west]{Wasserstein space on $\mc P_2$};
\draw[red,dashed] (-1.7,0.75) -- (1.7,0.75) node[anchor=west]{Wasserstein space on $\frac{3}{4}\,\mc P_2$};
\draw[red,dashed] (-1.7,0.5) -- (1.7,0.5) node[anchor=west]{Wasserstein space on $\frac{1}{2}\,\mc P_2$};

\draw[-stealth] (-2,0) -- (-2,2);
\draw node[anchor=east] () at (-2,1) {Mass};

\end{tikzpicture}
    \caption{Using the analogy of figure \ref{fig:my_label}, we highlight the intuition of some orthogonality between the planes with constant mass (i.e proportional to $\mc{P}_2$), and the half-line $\{aT_{\frac{1}{a}}\#\bar \mu \vert a >0 \}$. }
    \label{fig:my_label2}
\end{figure}

In fact, the following result holds.
\begin{theorem}[Characterization of conservative functionals]
Let $F$ be a functional on $\mc{M}(\R^d)$ satisfying~\eqref{eq:WOPfirstVariation}.
Recall that the variation of the total mass of the $\WOP$ gradient of $F$ at $\mu$ is given by $m'_F(\mu)=\int \frac{\delta F}{\delta \mu} \ud \bar{\mu} - \int \langle \nabla \frac{\delta F}{\delta \mu}, (x-x_0) \rangle \ud \bar{\mu}$.
Denote by $\nu_t$ be a gradient flow of $F$ (i.e. satisfying equation~\eqref{eq:FG}) starting at $\nu_0$ then the following assertions are equivalent.
\begin{enumerate}
    \item $F(\mu) = F(aT_{\frac{1}{a}}\#\mu),  \quad \forall \mu \in \mc{M}(\R^d)\setminus\{O_\mc{M}\}, \: \forall a>0$
    \item $m_{F}'(\mu) = 0, \quad \forall \mu \in \mc{M}(\R^d)\setminus\{O_\mc{M}\}$
    \item $m_{\nu_t}$ is constant for all $t >0$ and all $\nu_0$.
\end{enumerate}
\end{theorem}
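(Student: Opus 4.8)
The plan is to close the three-way equivalence through two bridges, $1\Leftrightarrow 2$ (geometric) and $2\Leftrightarrow 3$ (dynamic), organised around a single curve. The central object is the scaling curve $c_\mu(a)=aT_{\frac1a}\#\mu$ for $a>0$, i.e. the teal half-line of Figure~\ref{fig:my_label2}: it passes through $\mu$ at $a=1$, has mass $m_{c_\mu(a)}=a\,m_\mu$ and normalization $\overline{c_\mu(a)}=T_{\frac1a}\#\bar\mu$. Statement~1 is precisely the assertion that $F$ is constant along every such curve, so the whole geometric content reduces to understanding $\frac{\ud}{\ud a}F(c_\mu(a))$.

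First I would compute the tangent vector of $c_\mu$ at a generic point $c_\mu(a)$. Testing $\partial_a c_\mu(a)$ against $\phi\in\mc{C}_c^\infty(\R^d)$ and changing variables through $y=T_{\frac1a}(x)$, then matching the result against the form~\eqref{eq:taneq}, yields the tangent vector $(u,m')=\big(-\tfrac1a(x-x_0),\,m_\mu\big)$. The decisive observation, which is exactly the orthogonality suggested in Figure~\ref{fig:my_label2}, is that the transport part of the metric~\eqref{eq:ScalarProd} annihilates this direction: one checks directly that $m_{c_\mu(a)}u+m'(x-x_0)=a m_\mu\cdot\big(-\tfrac1a(x-x_0)\big)+m_\mu(x-x_0)=0$. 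Pairing this tangent vector against $\nabla_\WOP F(c_\mu(a))$ through~\eqref{eq:ScalarProd} therefore kills the integral term, and the chain rule~\eqref{eq:WOPchainrule} collapses to the single identity $\frac{\ud}{\ud a}F(c_\mu(a))=m_\mu\, m'_F(c_\mu(a))$. Both halves of $1\Leftrightarrow 2$ follow at once: if $m'_F\equiv 0$ (statement~2) then $F$ is constant along every $c_\mu$, hence $F(\mu)=F(aT_{\frac1a}\#\mu)$, which is statement~1; conversely, if $F$ is constant along every such curve the left-hand side vanishes at $a=1$ for every $\mu\neq 0_{\mc M}$, and since $m_\mu>0$ this forces $m'_F(\mu)=0$.

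For $2\Leftrightarrow 3$ I would read off the mass evolution along a gradient flow directly from~\eqref{eq:FG}: testing the equation against the constant function $1$, the divergence term contributes nothing to the total mass, so $\frac{\ud}{\ud t}m_{\nu_t}=-m'_F(\nu_t)$. Hence statement~2 makes the mass constant along every flow, which is statement~3; and if the mass is constant along the flow issued from an arbitrary $\nu_0$, evaluating at $t=0$ gives $m'_F(\nu_0)=0$ for all $\nu_0$, i.e. statement~2.

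I expect the main obstacle to be analytic rather than algebraic. On the geometric side one must verify that $c_\mu$ is an admissible locally Lipschitz path and that $u=-\tfrac1a(x-x_0)=\nabla\!\big(-\tfrac{1}{2a}|x-x_0|^2\big)$ genuinely belongs to the tangent space at $\overline{c_\mu(a)}$ and to the relevant $L^2$ space, so that~\eqref{eq:WOPchainrule} and the linear-growth hypothesis on $\nabla\frac{\delta F}{\delta\mu}$ legitimately apply. On the dynamic side the delicate point is testing~\eqref{eq:FG} against the non-compactly-supported constant function; I would make this rigorous with a cutoff $\chi_R$, bounding the remainder by means of the linear growth of $u_F$ together with the second-order moment control of Definition~\ref{def:Mk}, and then letting $R\to\infty$.
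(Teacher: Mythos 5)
Your proof is correct and reaches the same two pivotal identities as the paper, but the first bridge is organised differently. For $2\Leftrightarrow 3$ you and the paper do the same thing: test \eqref{eq:FG} against (an approximation of) the constant function to get $\frac{\ud}{\ud t}m_{\nu_t}=-m'_F(\nu_t)$; the paper uses a sequence $\phi_n\to 1$ with $\nabla\phi_n\to 0$ and dominated convergence, which is exactly your cutoff $\chi_R$. For $1\Leftrightarrow 2$ the paper does \emph{not} invoke the chain rule: it recomputes the difference quotient $\frac{1}{\eps}\big(F(\mu_{a+\eps})-F(\mu_a)\big)$ directly from the first-variation hypothesis \eqref{eq:WOPfirstVariation}, splitting it into a mass term and a transport term and passing to the limit, to arrive at $\frac{\ud}{\ud a}F(\mu_a)=m_\mu\, m'_F(\mu_a)$. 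You instead identify the tangent vector $(u,m')=\bigl(-\tfrac1a(x-x_0),\,m_\mu\bigr)$ of the scaling curve, observe the exact cancellation $m_{c_\mu(a)}u+m'(x-x_0)=0$ in the inner product \eqref{eq:ScalarProd}, and read the same identity off the chain rule \eqref{eq:WOPchainrule}. Your route is more conceptual --- it makes the orthogonality of Figure~\ref{fig:my_label2} into an actual computation rather than a heuristic --- and it reuses Theorem~\ref{thm:Wgradient} instead of repeating its proof; the price is that you must justify applying \eqref{eq:WOPchainrule} to the scaling curve, which is not a geodesic. This is legitimate (the paper's own proof of Theorem~\ref{thm:Wgradient} notes that \eqref{eq:WOPfirstVariation} reduces the chain rule to controlling $\WOP(\mu_t,\mu_0)/t$, and along $c_\mu$ the normalized measures $T_{m}\#\bar c_\mu(a)$ are literally constant so this quotient is trivially bounded), and you correctly flag it as the point needing care. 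No gap; the remaining analytic caveats you list (admissibility of the path, $u\in L^2$, linear growth of $\nabla\frac{\delta F}{\delta\mu}$) are the same ones the paper leaves implicit.
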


\begin{proof}
Set $\mu_a= aT_{\frac{1}{a}}\#\mu$.
We first prove the equivalence between assertions $1.$ and $2.$.
The item $1.$ is equivalent to $\frac{\ud}{\ud a}F(\mu_a)=0$ and using the first variation, for $\eps > -a$,
\begin{align*}
    \frac{F(\mu_{a+\eps}) - F(\mu_a)}{\eps} &= \underset{(A)}{\underbrace{\frac{1}{\eps} \int \frac{\delta F}{\delta \mu_a} \ud( \mu_{a+\eps} - \mu_a )}} + o \left( \underset{(B)}{\underbrace{\frac{1}{\eps} \WOP(\mu_{a+\eps},\mu_a)}} \right)
\end{align*}
We first compute the term $(B)$
\[
    \frac{\WOP(\mu_{a+\eps},\mu)}{\eps} = \sqrt{ \frac{ ((a+\eps) - a)^2 }{\eps^2} + \frac{W_2^2(\bar{\mu},\bar{\mu})}{\eps^2} }= 1
\]
Now for the term $(A)$, using similar arguments to the proof of Theorem~\ref{thm:Wgradient},
\begin{align*}
    (A) =& \bigg( \frac{ (a+\eps)-a}{\eps} \bigg) \int \frac{\delta F}{\delta \mu_a} \ud T_{\frac{1}{a}}\#\bar{\mu} + (a+\eps) \int \frac{\delta F}{\delta \mu_a} \frac{1}{\eps}\ud( T_{\frac{1}{a+\eps}}\#\bar{\mu} - T_{\frac{1}{a}}\#\bar{\mu})\\
    =& \int \frac{\delta F}{\delta \mu_a} \ud T_{\frac{1}{a}}\#\bar{\mu}  + (a+\eps) \int \frac{1}{\eps} \bigg( \frac{\delta F}{\delta\mu_a}\big( T_{\frac{1}{a+\eps}} \big) - \frac{\delta F}{\delta\mu_a}\big( T_{\frac{1}{a}} \big) \bigg) \ud \bar{\mu}\\
    \underset{\eps \to 0}{\longrightarrow} & \int \frac{\delta F}{\delta \mu_a} \ud T_{\frac{1}{a}}\#\bar{\mu}  +  \int \langle \big(\nabla \frac{\delta F}{\delta \mu_a}\big) \circ T_{\frac{1}{a}}, \partial_\eps T_{\frac{1}{a+\eps}}\vert_{\eps = 0} \rangle \ud \bar{\mu}\\
    =& \frac{1}{a} \int \frac{\delta F}{\delta \mu_a} \ud \mu_a - \frac{1}{a^2} \int \langle \big(\nabla \frac{\delta F}{\delta \mu_a}\big) \circ T_{\frac{1}{a}}, (x-x_0) \rangle \ud \bar{\mu}\\
    =& \frac{1}{a}\bigg[\int \frac{\delta F}{\delta \mu_a} \ud \mu_a - \int \langle \nabla \frac{\delta F}{\delta \mu_a},(x-x_0) \rangle \ud\mu_a \bigg]\\
    =& \frac{m_{\mu_a}}{a}m'_F(\mu_a)
\end{align*}
Thus $F(\mu_a)=F(\mu),$ for all $\mu \in \mc{M}(\R^d)\setminus \{ O_\mc{M} \}$ and all $a>0$ is equivalent to $m_F'(\mu_a)=0, \forall \mu \in \mc{M}(\R^d)\setminus \{ O_\mc{M} \}, \; \forall a>0$, which is equivalent to $m'_F(\mu) = 0, \quad \forall \mu \in \mc{M}(\R^d)\setminus \{O_\mc{M} \}$.  

We now prove the equivalence between $2.$ and $3.$. 
Given the $\WOP$ gradient $\nabla_{\WOP}F(\mu) = \big( m'_{F}(\mu), u_{F}(\mu) \big)$, the gradient flow equation is given by the weak partial differential equation integrated against the set $\mc{C}_c^{\infty}(\R^d \times ]0,\+\infty[)$
\[
\partial_t \nu_t =-\frac{m'_{F}(\nu_t)}{m_{\nu_t}}\nu_t + \dive\big(u_{F}(\nu_t) \nu_t \big).
\]
We prove that $m_{\nu_t}$ is weakly differentiable and the weak derivative is $-m'_{F}(\nu_t)$. Indeed let $ ( \phi_n(x) )_n$ be a sequence of functions in $\mc{C}^{\infty}_c(\R^d)$ satisfying 
\begin{align*}
& \phi_n(x) \underset{n\to+\infty}{\longrightarrow} 1 & \sup\limits_{n \in \mb{N}} \Vert \phi_n \Vert_{\infty} < +\infty\\
& \nabla\phi_n(x) \underset{n\to+\infty}{\longrightarrow} 0 & \sup\limits_{n \in \mb{N}} \Vert \nabla\phi_n \Vert_{\infty} < +\infty
\end{align*}
Let $\psi(t) \in \mc{C}_c^{\infty}(\R^\star_+)$, then $\phi_n(x)\psi(t) \in \mc{C}^{\infty}_c(\R^d \times \R^\star_+)$, and by integrating the gradient flow equation against this set of functions:
\[
    \int_{\R^d \times \R^\star_+} \phi_n\psi \partial_t \nu_t \ud t = -\int_{\R^d \times \R^\star_+} \phi_n\psi \frac{m'_{F}(\nu_t)}{m_{\nu_t}} \ud\nu_t \ud t + \int_{\R^d \times\R^\star} \phi_n\psi \dive\big(u_{F}(\nu_t) \nu_t \big)
    \]
    or equivalently,
    \[
    -\int_{\R^\star} \partial_t\psi \bigg[ \int\phi_n \ud\nu_t \bigg] \ud t = -\int_{\R^\star} \psi m'_{F}(\nu_t) \bigg[ \int \phi_n(x) \ud \bar{\nu}_t \bigg] \ud t - \int_{\R^\star} \psi \bigg[ \int \nabla\phi_n u_{F}(\nu_t) \ud\nu_t \bigg] \ud t.
    \]
Using the dominated convergence theorem, we end up with 
\[
-\int_{\R^\star} \partial_t \psi \; m_{\nu_t} \ud t = -\int_{\R^\star} \psi m'_{F}(\nu_t) \ud t.
\]
Using the continuity of $\nu_t$, we conclude the third equivalence. 
\end{proof}

This result readily allows us to extend functionals on $\mc{P}_2(\R^d)$ to functionals on $\mc{M}(\R^d)$ in a way that preserve the $W_2$ flow gradient.

\begin{corollary}[Extension of $\mc{P}_2$ functionals]\label{cor:extension}

Let $F$ be a functional on $\mc{P}_2(\R^d)$, we define on $\mc{M}(\R^d)$ the following functional 
\[
\tilde{F}(\mu)=F(T_{m_\mu}\#\bar{\mu}).
\]
Then $\tilde{F}$ and $F$ coincide on $\mc{P}_2(\R^d)$, the $\WOP$ gradient-flow of $\tilde{F}$ has constant mass and coincide with the $W_2$ gradient-flow of $F$ for probability measures.
\end{corollary}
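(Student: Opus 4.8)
The plan is to handle the three assertions in order, the first being immediate, the second reducing to the mass-invariance criterion of the preceding theorem, and the third to an identification of tangent vectors. The single algebraic fact I would isolate at the outset is that the dilations compose as $T_a\circ T_b=T_{ab}$ for all $a,b>0$, which is read off directly from $T_a(x)=a(x-x_0)+x_0$. For the coincidence on $\mc P_2(\R^d)$, note that $\mu\in\mc P_2(\R^d)$ means $m_\mu=1$, hence $T_{m_\mu}=T_1=\mathrm{Id}$ and $\bar\mu=\mu$, so that $\tilde F(\mu)=F(T_1\#\mu)=F(\mu)$.

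For the constant-mass claim, I would verify that $\tilde F$ satisfies the mass-invariance equation \eqref{eq:massInvariance}, i.e. $\tilde F(\mu)=\tilde F(aT_{\frac1a}\#\mu)$ for every $a>0$ and $\mu\in\mc M(\R^d)\setminus\{0_{\mc M}\}$. Setting $\nu=aT_{\frac1a}\#\mu$, the pushforward preserves total mass while the scalar factor rescales it, so $m_\nu=am_\mu$ and $\bar\nu=T_{\frac1a}\#\bar\mu$. Then $T_{m_\nu}\#\bar\nu=(T_{am_\mu}\circ T_{\frac1a})\#\bar\mu=T_{m_\mu}\#\bar\mu$ by the composition rule, whence $\tilde F(\nu)=F(T_{m_\mu}\#\bar\mu)=\tilde F(\mu)$. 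Invoking the characterization of conservative functionals (the preceding theorem) through the equivalence of its items $1.$ and $3.$ then yields both $m'_F(\mu)\equiv0$ and the constancy of the mass along any $\WOP$ gradient flow of $\tilde F$.

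For the last claim I restrict to a gradient flow $\nu_t$ issued from a probability measure. By the constant-mass property, $m_{\nu_t}\equiv1$, so $\bar\nu_t=\nu_t$, and since $m'_F\equiv0$ the flow equation \eqref{eq:FG} collapses to $\partial_t\nu_t=\dive\big(u_F(\nu_t)\nu_t\big)$. It remains to identify $u_F(\nu_t)$ with the $W_2$ gradient $\nabla\frac{\delta F}{\delta\nu_t}$. For this I would take an arbitrary curve $\mu_s$ of probability measures through $\mu=\nu_t$ with $\WOP$-tangent $(u,0)$, so that $\partial_s\mu_s|_{s=0}=-\dive(u\mu)$ by \eqref{eq:taneq}. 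Because $\tilde F=F$ on $\mc P_2(\R^d)$, the $\WOP$ chain rule \eqref{eq:WOPchainrule} together with the scalar product \eqref{eq:ScalarProd} evaluated at $m_\mu=1$, $m'=0$ gives $\frac{\ud}{\ud s}F(\mu_s)|_{s=0}=\int\langle u_F(\mu),u\rangle\ud\mu$, while the usual $W_2$ chain rule gives $\frac{\ud}{\ud s}F(\mu_s)|_{s=0}=\int\langle\nabla\frac{\delta F}{\delta\mu},u\rangle\ud\mu$. Since these agree for every $u\in\mathbf{T_\mu \mc{P}_2(\R^d)}$ and both $u_F(\mu)$ and $\nabla\frac{\delta F}{\delta\mu}$ lie in that closed subspace of $L^2(\mu)$, uniqueness of the Riesz representative forces $u_F(\mu)=\nabla\frac{\delta F}{\delta\mu}$, and the flow equation becomes $\partial_t\nu_t=\dive\big(\nabla\frac{\delta F}{\delta\nu_t}\,\nu_t\big)$, the $W_2$ gradient flow of $F$.

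The main obstacle I anticipate is precisely this identification step: one must apply the two chain rules along the same class of curves and ensure that $u_F(\mu)$ genuinely belongs to $\mathbf{T_\mu \mc{P}_2(\R^d)}$, not merely to $L^2(\mu)$, so that the variational identity pins it down. A secondary technical point, needed to invoke the characterization theorem in the second paragraph, is to confirm that $\tilde F$ inherits a first variation satisfying \eqref{eq:WOPfirstVariation} with the linear-growth gradient required by Theorem~\ref{thm:Wgradient}; I expect this to follow by expressing $\frac{\delta\tilde F}{\delta\mu}$ through $\frac{\delta F}{\delta\mu}$ and the dilation $T_{m_\mu}$, but it is the one place where the regularity hypotheses on $F$ must be transported carefully.
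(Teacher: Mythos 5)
Your proposal is correct and follows exactly the route the paper intends: the paper states this corollary without an explicit proof, presenting it as an immediate consequence of the characterization of conservative functionals, and your argument (checking $T_{am_\mu}\circ T_{1/a}=T_{m_\mu}$ to get mass invariance, then invoking the equivalence of items $1.$ and $3.$, then specializing the gradient formula at $m_\mu=1$, $m'_F=0$) simply fills in those details in the natural way. The one point you rightly flag --- that $\frac{\delta\tilde F}{\delta\mu}$ restricted to $\mc P_2(\R^d)$ is a first variation of $F$ only up to an additive constant, which is harmless since it does not affect $\nabla\frac{\delta F}{\delta\mu}$ --- is handled correctly by your chain-rule identification.
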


For example, the Boltzmann entropy functional on $\mc{P}_2(\R^d)$ is given by
\[
E(\mu)=
\left\{ \begin{array}{ll} \int \log(\mu) \ud\mu & \text{ if } \mu\ll dx\\
+\infty & \text{ otherwise}\\
\end{array}\right. \quad \forall \mu \in \mc{P}_2(\R^d),
\]
The gradient flow of $E$ for the $W_2$ metric is the heat equation \cite{jordan1998variational}
\begin{equation}\label{eq:GF_Boltzmann1}
\partial_t \nu_t = \Delta \nu_t.
\end{equation}
The extension of the Boltzmann entropy for the $\WOP$ metric is given by
\[
\tilde{E}(\mu) = \left\{ \begin{array}{ll} \int \log(\bar{\mu}) \ud\bar{\mu} -d\log(m_\mu) & \text{ if } \mu\ll dx\\
+\infty & \text{ otherwise}\\
\end{array}\right. \quad \forall \mu \in \mc{M}(\R^d),
\]
The $\WOP$ gradient flow of $\tilde{E}$ satisfies the following equation
\begin{equation}\label{eq:GF_Boltzmann2}
\partial_t \nu_t = \frac{1}{m_{\nu_t}^2}\Delta \nu_t,
\end{equation}
which coincides with the heat equation~\eqref{eq:GF_Boltzmann1} when the initial measure of the gradient flow $\mu_0$ is a probability measure.

The following corollary gathers other examples of functionals and their $\WOP$ gradients. 
\begin{corollary}
Denote the $\WOP$ gradient of $F$ at $\mu$ by 
\[
\nabla_{\WOP}F(\mu)=(u_F,m_F')(\mu)
\]
For $F(\mu)=m_\mu$, the gradient is given by
\[
m'_F=1 \quad\text{and}\quad u_F(x)=-\frac{x-x_0}{m_\mu}.
\]
For $F$ such that $F(\mu)=F(\bar\mu)$ for all $\mu$, the gradient satisfies
\[
m'_F=-\frac{1}{m_\mu}\int\langle \nabla\frac{\delta F(\bar\mu)}{\delta \bar\mu},(x-x_0)\rangle\ud \bar\mu \quad\text{and}\quad u_F(x)=\frac{1}{m_\mu}\left[\nabla\frac{\delta F(\mu)}{\delta \mu}-m'(x-x_0)\right].
\]
In particular, if $F(\mu)=M_{x_0}(\bar{\mu})/2$, then
\[
m'_F=-\frac{M_{x_0}(\bar{\mu})}{m_\mu} \quad\text{and}\quad u_F(x) =\frac{1}{m_\mu^2}\left[(1+M_{x_0}(\bar{\mu}))(x-x_0)\right].
\]
and for $F(\mu)=m_\mu^2(1+M_{x_0}(\bar{\mu}))/2$, using $\nabla(fg)=f\nabla g+ g\nabla f$, we have
\[
m'_F=m_\mu \quad\text{and}\quad u_F(x)=0.
\]
Similarly, the gradient of $F(\mu)=m_\mu^2 M_{x_0}(\bar{\mu})/2$ is
\[
m'_F=0 \quad\text{and}\quad u_F(x)=x-x_0.
\]
\end{corollary}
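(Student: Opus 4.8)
The plan is to obtain every case from the $\WOP$-gradient formula \eqref{eq:WOPgrad} of Theorem~\ref{thm:Wgradient}, so that the whole statement reduces to computing the first variation $\frac{\delta F}{\delta\mu}$ of each functional and substituting. Three of the functionals are polynomial in $\mu$ and are immediate. For $F(\mu)=m_\mu=\int 1\,\ud\mu$ the first variation is the constant $1$, whence $\nabla\frac{\delta F}{\delta\mu}=0$ and \eqref{eq:WOPgrad} gives $m'_F=1$ and $u_F(x)=-\frac{x-x_0}{m_\mu}$. For the two quadratic functionals I would compute $\frac{\delta F}{\delta\mu}$ using the product rule for first variations together with the identity $M_{x_0}(\mu)=m_\mu M_{x_0}(\bar\mu)$, which lets me rewrite $m_\mu^2 M_{x_0}(\bar\mu)=m_\mu M_{x_0}(\mu)$. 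Thus for $F(\mu)=m_\mu^2 M_{x_0}(\bar\mu)/2=\frac{1}{2}m_\mu M_{x_0}(\mu)$ one gets $\frac{\delta F}{\delta\mu}=\frac{1}{2}\big(M_{x_0}(\mu)+m_\mu|x-x_0|^2\big)$, so $\nabla\frac{\delta F}{\delta\mu}=m_\mu(x-x_0)$; feeding this into \eqref{eq:WOPgrad} the two contributions to $m'_F$ cancel and one reads off $m'_F=0$ and $u_F(x)=x-x_0$. Adding the term $\frac{1}{2}m_\mu^2$, whose first variation is the constant $m_\mu$, turns this into $F(\mu)=m_\mu^2(1+M_{x_0}(\bar\mu))/2$ and only shifts $m'_F$ to $m_\mu$, leaving $u_F=0$. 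As a consistency check, this last functional equals $\frac{1}{2}\WOP^2(\mu,0_\mc{M})$, and since by the $1$-homogeneity property and Property~\ref{prop:geodesics} the geodesic from $\mu$ to $0_\mc{M}$ is the rescaling $t\mapsto(1-t)\mu$, with initial mass-velocity $-m_\mu$ and vanishing transport field, the gradient of half the squared distance must be $m'_F=m_\mu$, $u_F=0$.

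The substantive case is $F(\mu)=F(\bar\mu)$. Here I would derive the $\WOP$ first variation from the $W_2$ first variation $\phi:=\frac{\delta F}{\delta\bar\mu}(\bar\mu)$. Writing $\nu=\mu+\delta$ with $\int\ud\delta=m_\nu-m_\mu$ and expanding
\[
\int\phi\,\ud\bar\nu-\int\phi\,\ud\bar\mu=\frac{1}{m_\nu}\int\phi\,\ud\nu-\frac{1}{m_\mu}\int\phi\,\ud\mu
\]
to first order in $\delta$ and in the mass increment yields $\frac{1}{m_\mu}\int\big(\phi-\int\phi\,\ud\bar\mu\big)\,\ud(\nu-\mu)$, so that $\frac{\delta F}{\delta\mu}=\frac{1}{m_\mu}\big(\phi-\int\phi\,\ud\bar\mu\big)$; note this is independent of the additive constant that $\phi$ is only defined up to. Substituting into \eqref{eq:WOPgrad}, the term $\int\frac{\delta F}{\delta\mu}\,\ud\bar\mu$ vanishes by construction and, since $\nabla\frac{\delta F}{\delta\mu}=\frac{1}{m_\mu}\nabla\phi$, one obtains $m'_F=-\frac{1}{m_\mu}\int\langle\nabla\phi,(x-x_0)\rangle\,\ud\bar\mu$, while the formula for $u_F$ is simply the general one restated. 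Specializing to $\phi=\frac{1}{2}|x-x_0|^2$, i.e. $F(\mu)=M_{x_0}(\bar\mu)/2$, gives $\nabla\phi=x-x_0$ and hence $m'_F=-M_{x_0}(\bar\mu)/m_\mu$ and $u_F(x)=\frac{1}{m_\mu^2}(1+M_{x_0}(\bar\mu))(x-x_0)$.

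The one step requiring genuine care --- and the main obstacle --- is justifying that the remainder discarded in the expansion above is truly $o(\WOP(\mu,\nu))$, and not merely $o\big(|m_\mu-m_\nu|+W_2(\bar\mu,\bar\nu)\big)$, so that $\frac{\delta F}{\delta\mu}$ really satisfies \eqref{eq:WOPfirstVariation}. I would control this through \eqref{eq:defbis}: near a fixed $\mu$ with $m_\mu>0$, Property~\ref{prop:WOPconv} ensures $M_{x_0}(\nu)\to M_{x_0}(\mu)$, so the cross term $(m_\mu-m_\nu)(M_{x_0}(\mu)-M_{x_0}(\nu))$ is only a lower-order perturbation of the positive quadratic form $(m_\mu-m_\nu)^2+m_\mu m_\nu W_2^2(\bar\mu,\bar\nu)$. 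Consequently both $|m_\mu-m_\nu|$ and $W_2(\bar\mu,\bar\nu)$ are $O(\WOP(\mu,\nu))$ in a neighborhood of $\mu$, which upgrades the two $o(\cdot)$ remainders to $o(\WOP(\mu,\nu))$ as required. With this in place every displayed identity follows by inspection, completing the proof.
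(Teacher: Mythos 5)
Your proposal is correct and follows the route the paper intends: the corollary is stated without proof as a direct application of the gradient formula \eqref{eq:WOPgrad} of Theorem~\ref{thm:Wgradient}, and your computations of the first variations (including the reduction $\frac{\delta F}{\delta\mu}=\frac{1}{m_\mu}\bigl(\phi-\int\phi\,\ud\bar\mu\bigr)$ for mass-invariant functionals and the identity $m_\mu^2M_{x_0}(\bar\mu)=m_\mu M_{x_0}(\mu)$) all check out against the stated gradients. The only content beyond what the paper supplies is your careful justification that the discarded remainders are $o(\WOP(\mu,\nu))$ via \eqref{eq:defbis}, which is a welcome addition rather than a deviation.
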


\section{Barycenters}\label{sec:barycenter}

Let us now turn to barycenters in the space $(\mc{M}_K(\R^d),\WOP)$.
Recall that for a probability measure $\mb{P}=\sum_{i=1}^n\lambda_i\delta_{\mu_i}$ over $(\mc{M}(\R^d),\WOP)$, its barycenter $\mu_B\in\mc{M}(\R^d)$ is defined as the minimizer of
    \[
    \mu\mapsto V(\mu):=\sum_i\lambda_i \WOP_{x_0}^2(\mu,\mu_i)=\int \WOP^2(\mu,\tilde\mu)\ud\mb{P}(\tilde\mu).
    \]
Since $\WOP$ extends the Wasserstein distance, it is expected that there is a simple relationship between barycenters for the Wasserstein distance and for $\WOP$.
This is the content of the following result.

\begin{theorem}[Barycenters]
Let $\mb{P}=\sum_i\lambda_i\delta_{\mu_i}\in\mc{P}(\mc{M}(\R^d))$.
   Then its barycenter $\mu_B$ on $(\mc{M}(\R^d),\WOP_{x_0})$ has mass $m_{\mu_B}=\sum_i\lambda_im_{\mu_i}$ and $\bar\mu_B$ is the Wasserstein barycenter of $\sum_i\lambda_im_{\mu_i}\delta_{\bar\mu_i}$.
   In particular, although the value of $\WOP(\mu_0,\mu_1)$ depends on $x_0$, the geodesics between $\mu_0$ and $\mu_1$ do not depend on $x_0$.
\end{theorem}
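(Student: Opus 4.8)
The plan is to uncover the product structure hidden in the definition of $\WOP$ and thereby split the barycenter into two decoupled problems. First I would introduce the change of variables $\Phi:\mu\mapsto(m_\mu,\tilde\mu)$, with $\tilde\mu:=T_{m_\mu}\#\bar\mu$, which is a bijection from $\mc{M}(\R^d)\setminus\{0_\mc{M}\}$ onto $\R_+^\star\times\mc{P}_2(\R^d)$, with inverse $(m,\tilde\mu)\mapsto m\cdot T_m^{-1}\#\tilde\mu$. Read in these coordinates, Definition~\ref{def:Wass} becomes
\[
\WOP^2(\mu,\nu)=|m_\mu-m_\nu|^2+W_2^2(\tilde\mu,\tilde\nu),
\]
so that $\Phi$ is an isometry onto the $\ell^2$ product of $(\R_+,|\cdot|)$ and $(\mc{P}_2(\R^d),W_2)$.

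On a product metric space the variance functional splits, so I would minimise the two factors separately. Writing $m:=m_\mu$, $m_i:=m_{\mu_i}$ and $\tilde\mu_i:=T_{m_i}\#\bar\mu_i$, the energy becomes
\[
V(\mu)=\sum_i\lambda_i|m-m_i|^2+\sum_i\lambda_iW_2^2(\tilde\mu,\tilde\mu_i),
\]
where the two sums depend only on $m$ and only on $\tilde\mu$, respectively, and may be minimised independently. The first is minimised at $m^\star=\sum_i\lambda_im_i$ (using $\sum_i\lambda_i=1$), which already yields the claimed mass $m_{\mu_B}=\sum_i\lambda_im_{\mu_i}$; the second is minimised by the Wasserstein barycenter $\tilde\mu^\star$ of $\{\tilde\mu_i\}$ with weights $\{\lambda_i\}$, whose existence I would borrow from the standard theory \cite{agueh2011barycenters}. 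Hence $\mu_B=\Phi^{-1}(m^\star,\tilde\mu^\star)$ and $\bar\mu_B=T_{m^\star}^{-1}\#\tilde\mu^\star$.

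It remains to re-express the shape problem through the normalised measures $\bar\mu_i$, which is where the flat identity~\eqref{eq:flateq} enters. Since $T_{m^\star}^{-1}$ is a bijection of $\mc{P}_2(\R^d)$, the measure $\bar\mu_B$ minimises $\bar\mu\mapsto\sum_i\lambda_iW_2^2(T_{m^\star}\#\bar\mu,T_{m_i}\#\bar\mu_i)$. Expanding each summand exactly as in the derivation of~\eqref{eq:defbis} gives
\[
W_2^2(T_{m^\star}\#\bar\mu,T_{m_i}\#\bar\mu_i)=m^\star(m^\star-m_i)M_{x_0}(\bar\mu)+m_i(m_i-m^\star)M_{x_0}(\bar\mu_i)+m^\star m_iW_2^2(\bar\mu,\bar\mu_i).
\]
Summing against $\lambda_i$, the coefficient of the only $\bar\mu$-dependent nuisance term $M_{x_0}(\bar\mu)$ equals $m^\star\sum_i\lambda_i(m^\star-m_i)=0$ by the choice of $m^\star$, and the middle term is constant in $\bar\mu$. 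Thus, up to an additive constant and the positive factor $m^\star$, minimising over the shape coincides with minimising $\sum_i\lambda_im_iW_2^2(\bar\mu,\bar\mu_i)$, so $\bar\mu_B$ is the Wasserstein barycenter of $\sum_i\lambda_im_{\mu_i}\delta_{\bar\mu_i}$, as claimed.

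The step I expect to be the main obstacle is precisely this reconciliation: the product splitting delivers the ``wrong'' weighting (plain $\lambda_i$ on the scaled measures $\tilde\mu_i$), and it is only the cancellation $\sum_i\lambda_i(m^\star-m_i)=0$ afforded by~\eqref{eq:flateq} that converts it into the mass-weighted problem $\lambda_im_i$ on the normalised measures. I would also dispose of the degenerate case $m^\star=0$ separately, where every $\mu_i=0_\mc{M}$ and $\mu_B=0_\mc{M}$ trivially, so that for $m^\star>0$ the minimiser indeed lies in the image of $\Phi$. Finally, the resulting description involves neither $M_{x_0}$ nor the maps $T_m$, hence the barycenter is manifestly independent of $x_0$; specialising to two atoms with weights $(1-t,t)$ recovers the $x_0$-independence of geodesics of Property~\ref{prop:geodesics}.
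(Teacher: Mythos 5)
Your proof is correct, and it takes a genuinely different route from the paper's. You exploit the fact that $\mu\mapsto(m_\mu,T_{m_\mu}\#\bar\mu)$ is an isometry onto (the relevant part of) the $\ell^2$-product of $(\R_+,|\cdot|)$ and $(\mc{P}_2(\R^d),W_2)$, so the variance functional decouples into a scalar least-squares problem (giving $m^\star=\sum_i\lambda_i m_{\mu_i}$) and a standard Wasserstein barycenter problem for the rescaled measures $\tilde\mu_i=T_{m_{\mu_i}}\#\bar\mu_i$ with the \emph{plain} weights $\lambda_i$; the identity~\eqref{eq:flateq} together with the cancellation $\sum_i\lambda_i(m^\star-m_{\mu_i})=0$ then converts the latter into the mass-weighted barycenter problem for the normalized $\bar\mu_i$, and the resulting characterization is manifestly free of $x_0$. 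The paper instead expands $V(\mu)$ via~\eqref{eq:defbis} directly in the variables $(m_\mu,\bar\mu)$ and runs a multimarginal argument: it glues the optimal couplings between $\bar\mu$ and the $\bar\mu_i$ into a single coupling $\gamma$, uses a pointwise variance/bias decomposition around the weighted mean $\bar x$ to identify the optimal mass, and reads off the $x_0$-independence from the fact that the only $\bar\mu$-dependent term is a multimarginal energy not involving $x_0$. Your route buys a cleaner separation of the mass and shape problems and avoids the gluing construction entirely, at the cost of the extra reconciliation step (which you correctly identify as the crux) needed to pass from the $\lambda_i$-weighted barycenter of the $\tilde\mu_i$ to the $\lambda_i m_{\mu_i}$-weighted barycenter of the $\bar\mu_i$; the paper's route keeps everything in the normalized variables from the start but is computationally heavier. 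Your treatment of the degenerate case $m^\star=0$ and the two-atom specialization recovering geodesics are both sound.
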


Barycenters in this space do not depend on $x_0$ and have a simple form.
\begin{theorem}[Barycenters]
    Let $\mb{P}=\sum_i\lambda_i\delta_{\mu_i}\in\mc{P}(\mc{M}(\R^d))$.
    Its barycenter $\mu_B\in\mc{M}(\R^d)$ is defined as the minimizer of
    \[
    \mu\mapsto V(\mu):=\sum_i\lambda_i \WOP^2(\mu,\mu_i)=\int \WOP^2(\mu,\tilde\mu)\ud\mb{P}(\tilde\mu).
    \]
   Then $\mu_B$ does not depend on $x_0$.
   Moreoever, $m_{\mu_B}=\sum_i\lambda_im_{\mu_i}$ and $\bar\mu_B$ is the Wasserstein barycenter of $\sum_i\lambda_im_{\mu_i}\delta_{\bar\mu_i}$.
\end{theorem}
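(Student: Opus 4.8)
The conceptual reason behind the statement is that, as the dual-formulation computation already shows, $\mu\mapsto\delta_{m_\mu}\otimes T_{m_\mu}\#\bar\mu$ isometrically embeds $(\mc{M}(\R^d),\WOP)$ into $(\mc{P}_2(\R\times\R^d),W_2)$, so the $\WOP$ barycenter ought to be the Wasserstein barycenter of the embedded measures, which turns out to remain of the same product form. Rather than argue that the ambient barycenter stays in the image, I would carry out the equivalent but self-contained computation in the defining coordinates $\mu\leftrightarrow(m_\mu,T_{m_\mu}\#\bar\mu)$, which form a bijection from $\mc{M}(\R^d)$ onto $\R_+\times\mc{P}_2(\R^d)$. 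Writing $\mb{P}=\sum_i\lambda_i\delta_{\mu_i}$, $m_i=m_{\mu_i}$, $\sigma_i=T_{m_i}\#\bar\mu_i$, and using Definition~\ref{def:Wass}, for $m=m_\mu$ and $\sigma=T_{m}\#\bar\mu$ one has
\[
V(\mu)=\sum_i\lambda_i(m-m_i)^2+\sum_i\lambda_i W_2^2(\sigma,\sigma_i).
\]
Since $(m,\sigma)$ range freely and independently, this \emph{separates}: the minimizer has $m=\bar m:=\sum_i\lambda_i m_i$ and $\sigma=\Sigma$, the Wasserstein barycenter of $\sum_i\lambda_i\delta_{\sigma_i}$. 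This already gives $m_{\mu_B}=\sum_i\lambda_i m_{\mu_i}$ and reduces existence and uniqueness of $\mu_B$ to those of $\Sigma$ \cite{agueh2011barycenters}.

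It remains to identify $\bar\mu_B=T_{\bar m}^{-1}\#\Sigma$ in terms of the \emph{unscaled} measures $\bar\mu_i$. Assuming $\bar m>0$, I would reparametrize $\sigma=T_{\bar m}\#\rho$ (again a bijection of $\mc{P}_2(\R^d)$), so that $\rho^\star=\bar\mu_B$. Since $T_{\bar m}$ and $T_{m_i}$ are invertible affine maps, couplings of $(T_{\bar m}\#\rho,\sigma_i)$ are the $(T_{\bar m},T_{m_i})$-push-forwards of couplings of $(\rho,\bar\mu_i)$, and the pointwise identity~\eqref{eq:flateq} with $a=\bar m$, $b=m_i$ integrates to
\[
W_2^2(T_{\bar m}\#\rho,\sigma_i)=\bar m(\bar m-m_i)M_{x_0}(\rho)+m_i(m_i-\bar m)M_{x_0}(\bar\mu_i)+\bar m\,m_i\,W_2^2(\rho,\bar\mu_i),
\]
the first two terms being independent of the coupling and only the last one subject to the infimum (the degenerate atoms $m_i=0$, for which $\sigma_i=\delta_{x_0}$, satisfy this formula directly).

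The decisive step is that, summing against $\lambda_i$, the coefficient of the $x_0$-dependent term $M_{x_0}(\rho)$ equals $\bar m\sum_i\lambda_i(\bar m-m_i)=0$ \emph{precisely because} $\bar m$ is the $\lambda$-weighted mean of the $m_i$; the remaining $M_{x_0}(\bar\mu_i)$ terms form an additive constant. Hence $\sum_i\lambda_i W_2^2(T_{\bar m}\#\rho,\sigma_i)=\mathrm{const}+\bar m\sum_i\lambda_i m_i\,W_2^2(\rho,\bar\mu_i)$, whose minimizer $\rho^\star=\bar\mu_B$ is exactly the Wasserstein barycenter of $\sum_i\lambda_i m_{\mu_i}\delta_{\bar\mu_i}$. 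This characterization is manifestly reference-point-free: $\bar\mu_i$ and $W_2$ do not see $x_0$, and any mass-zero atom carries weight $\lambda_i m_i=0$; combined with the $x_0$-free mass $m_{\mu_B}=\bar m$, it follows that $\mu_B=\bar m\,\bar\mu_B$ is independent of $x_0$. Finally, $\bar m=0$ forces $\mu_i=0_{\mc{M}}$ for every $i$ with $\lambda_i>0$, whence $\mu_B=0_{\mc{M}}$.

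I expect the main obstacle to be the cancellation in the last paragraph: a priori the barycenter $\Sigma$ of the \emph{rescaled} measures $\sigma_i$ bears a complicated, $x_0$-dependent relation to the barycenter of the $\bar\mu_i$, and it is only the vanishing of the $M_{x_0}(\rho)$ coefficient — forced by $\bar m=\sum_i\lambda_i m_i$ — that collapses it to the clean weighted barycenter of $\sum_i\lambda_i m_{\mu_i}\delta_{\bar\mu_i}$. Getting the coupling bookkeeping in~\eqref{eq:flateq} right, namely that the two quadratic moments factor out of the infimum, is the other point requiring care.
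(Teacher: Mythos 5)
Your argument is correct, and it reaches the conclusion by a genuinely different route than the paper. You first pass to the lifted coordinates $(m,\sigma)=(m_\mu,T_{m_\mu}\#\bar\mu)$, in which $V$ splits \emph{exactly} into a mass term and a shape term by the very definition of $\WOP$; this gives $m_{\mu_B}=\sum_i\lambda_i m_{\mu_i}$ essentially for free, and only then do you translate the shape problem back to the unscaled measures via the identity~\eqref{eq:flateq}, where the coefficient of $M_{x_0}(\rho)$ vanishes because $\bar m$ is the $\lambda$-mean of the $m_i$. The paper instead works throughout in the $(m_\mu,\bar\mu)$ variables via~\eqref{eq:defbis}, where $V$ does \emph{not} separate, and recovers both claims simultaneously from the multimarginal formulation of the barycenter: it glues the optimal couplings into a single plan $\gamma$, introduces the weighted mean $\bar x$ (with $x_0$ acting as a virtual atom of weight $\tilde\lambda_0=m_\mu-\sum_i\tilde\lambda_i$) and uses the variance decomposition around $\bar x$ to isolate the $m_\mu$- and $\bar\mu$-dependence of $V$. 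Your version buys a cleaner logical structure (separation first, cancellation second) and avoids the gluing/multimarginal machinery entirely; the paper's version buys the explicit multimarginal representation $\int|\sum_i\tilde\lambda_i(x_i-x_0)|^2\ud\gamma$ of the shape objective, which is what makes the $x_0$-independence of the optimal coupling visible in their formulation. Two cosmetic points in yours: the map $\mu\mapsto(m,\sigma)$ is a bijection onto $\left(\R_+^*\times\mc{P}_2(\R^d)\right)\cup\{(0,\delta_{x_0})\}$ rather than onto the full product (harmless, since the mass minimizer $\bar m$ is positive unless all atoms vanish, a case you treat separately), and like the paper you implicitly rely on existence/uniqueness of the Wasserstein barycenter $\Sigma$, which you correctly delegate to the cited reference.
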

\begin{proof}
    Using \eqref{eq:defbis}, the barycenter $\mu_B$ thus minimizes
    \[
    \mu\mapsto\sum_i\lambda_i (m_\mu-m_{\mu_i})^2+ \sum_i\lambda_i (m_\mu-m_{\mu_i}) \big( m_\mu M_{x_0}(\bar\mu)-m_{\mu_i}M_{x_0}(\bar\mu_i) \big) + m_\mu \sum_i\lambda_i m_{\mu_i} W_2^2(\bar\mu,\bar\mu_i).
    \]
    Set $\tilde\lambda_i=\lambda_im_{\mu_i}$ for $i=1,\dots,n$, $\tilde\lambda_0=m_\mu-\sum_{i=1}^n\tilde\lambda_i$ and 
    \[
    \bar x= \frac{\sum_{i=0}^n\tilde\lambda_ix_i}{\sum_{i=0}^n\tilde\lambda_i}=\sum_{i=0}^n\frac{\tilde\lambda_ix_i}{m_\mu}
    \]
    Denote by $\gamma$ the coupling of $\bar\mu,\bar\mu_1,\dots,\bar\mu_n$ obtained by gluing the optimal couplings between $\bar\mu$ and $\bar\mu_i$.
    Then,
    \begin{align*}
\sum_i\lambda_i \big(m_{\mu_i} W_2^2(\bar\mu,\bar\mu_i)+(m_\mu-m_{\mu_i})M_{x_0}(\bar\mu) \big)  &=\int\sum_{i=1}^n\tilde\lambda_i|x_i-x|^2+\tilde\lambda_0|x_0-x|^2\ud\gamma(x,x_1,\dots,x_n)\\
&\ge \int\sum_{i=1}^n\tilde\lambda_i|x_i-\bar x|^2+\tilde\lambda_0|x_0-\bar x|^2\ud\gamma(x,x_1,\dots,x_n)\\
&= \sum_i\lambda_i \big( m_{\mu_i} W_2^2(\bar\mu,\bar\mu_i)+(m_\mu-m_{\mu_i}) M_{x_0}(\bar\mu) \big)
    \end{align*}
    Now, using that $\sum_{i=0}^n\frac{\tilde\lambda_i}{m_\mu}|x_i-\bar x|^2=\sum_{i=1}^n\frac{\tilde\lambda_i}{m_\mu}|x_i-x_0|^2-|\bar x-x_0|^2$, we get
    \[
\sum_i\lambda_im_{\mu_i} W_2^2(\bar\mu,\bar\mu_i)+(m_\mu-\sum_i\lambda_im_{\mu_i})M_{x_0}(\bar\mu)=\sum_i\lambda_im_{\mu_i} M_{x_0}(\bar\mu_i)-m_\mu M_{x_0}(\bar\mu) 
    \]
    Lastly,
    \begin{align*}
m_\mu^2 M_{x_0}(\bar\mu) &=m_\mu^2\int |\bar x-x_0|^2\ud\gamma(x,x_1,\dots,x_n)\\
&=\int|\sum_{i=1}^n\tilde\lambda_i(x_i-x_0)|^2\ud\gamma(x,x_1,\dots,x_n)
    \end{align*}
    Therefore, for $\bar\mu$ fixed, $m_{\mu}$ that minimizes $V(\mu)$ satisfies $m_\mu=\sum_i\lambda_im_{\mu_i}$.
    Indeed,
    \begin{align*}
V(\mu)&=\sum_i\lambda_i (m_\mu-m_{\mu_i})^2 - \sum_i\lambda_i (m_\mu-m_{\mu_i})m_{\mu_i}M_{x_0}(\bar\mu_i) + m \big(\sum_i\lambda_im_{\mu_i} M_{x_0}(\bar\mu_i)-m_\mu M_{x_0}(\bar\mu) \big) \\
&=\sum_i\lambda_i (m_\mu-m_{\mu_i})^2 - m_\mu^2M_{x_0}(\bar\mu)+ \sum_i\lambda_i m_{\mu_i}^2M_{x_0}(\bar\mu_i)\\
&=\sum_i\lambda_i (m_\mu-m_{\mu_i})^2 \underbrace{-\int|\sum_{i=1}^n\tilde\lambda_i(x_i-x_0)|^2\ud\gamma(x,x_1,\dots,x_n)+ \sum_i\lambda_i m_{\mu_i}^2M_{x_0}(\bar\mu_i)}_{\text{does not depend on }m_\mu}.
    \end{align*}
    For the last claim of the proof, remark than $V(\mu)$ depends on $\bar\mu$ only through $\int|\sum_{i=1}^n\tilde\lambda_i(x_i-x_0)|^2\ud\gamma(x,x_1,\dots,x_n)$, the optimal coupling of which does not depend on $x_0$.
\end{proof}

\section{Further developments}\label{sec:further}

In this section we study some limitations when extending the $2$-Wasserstein metric, as well as the impossibility to have the Entropy-Transport formulation extending the $2$-Wasserstein metric. 
We also briefly present an attempt to extend the $\WOP$ metric to any exponent $p \ge 1$.
The following property is a limitation for any metric on $\mc{M}(\R^d)$ generalizing the Wasserstein metric.

\paragraph{Impossibility result}
We start with an observation on the distance between the null measure and the space of probability measures.

The $\WOP$ distance of any measure of  $\mc{P}_2(\R^d)$ to the null measure $0_\mc{M}$ is not constant.
In fact, for Dirac measures $\delta_x$,
\[ 
\WOP(\delta_x,0_{\mc{M}}) = \sqrt{1 + \vert x - x_0 \vert^2 } .
\]

\begin{property}[Distance between $\mc{P}(\R^d)$ and the null measure]\label{th:impossibility1}
This is no metric $D(\mu,\nu)$ on $\mc{M}(\mc{X})$ with $\mc{X} \subset \R^d$ satisfying simultaneously the following properties.
\begin{enumerate}
    \item $D$ coincides with the 2-Wasserstein metric on $\mc{P}_2(\mc{X})$, i.e. $D(\mu,\nu)= W_2(\mu,\nu)$ for all $(\mu,\nu) \in \mc{P}_2(\mc{X})^2$\label{it:Dgeneral},
    \item $D(0,\mu)$ is bound, i.e. there exist $\alpha>0$ such that $D(0,\mu) \le \alpha, \quad \forall \mu \in \mc{P}(\mc{X})$,
    \item $\mc{X}$ is unbounded 
\end{enumerate}
\end{property}
Thus if one wants to generalize the Wasserstein metric and work for instance with Gaussian measures, one must drop the equidistance of measures in $\mc{P}^2(\R^d)$ to the null measure.
In comparison the $\HK$ metric does not satisfy point $1.$ but the points $2$ and $3$ hold.

\begin{proof}
Let $\mc{X} \subset \R^d$ and suppose that $(\mc{M}(\mc{X}),D)$ is a metric space satisfying the generalizing condition (\ref{it:Dgeneral}).
Then for any measures $(\mu,\nu) \in \mc{P}_2(\mc{X})^2$ we have, using the triangle inequality:
\[
W_2(\mu,\nu) = D(\mu,\nu) \le D(\mu,0_{\mc{M}}) + D(0_\mc{M},\nu) \le 2\alpha.
\]
This bounding condition on $W_2$ is equivalent to say that $\diam( \mc{X} )  \le 2\alpha$. Indeed for the implication $(\Rightarrow)$ we can take  $\mu = \delta_x$ and $\nu = \delta_y$ with $(x,y) \in \mc{X}^2$, then we have :
$$ W_2(\delta_x,\delta_y)= \vert x-y \vert \le 2\alpha $$

\noindent
The implication $(\Leftarrow)$ can be proven as follow :
\[
W_2(\mu,\nu) = \sqrt{\inf_{\pi \in \Pi(\mu,\nu)} \int \vert x-y \vert^2 \ud\pi} \le \sqrt{\inf_{\pi \in \Pi(\mu,\nu)} \int \diam(\mc{X})^2 \ud\pi} =\diam(\mc{X}) \le 2 \alpha.
\]
And so $\mc{X}$ is bounded which contradicts our assumption.
\end{proof}

We here justify the development of our metric outside of the broad framework of Entropy-Transport formulation.

Let us first recall the definition of Csiszar $f$-divergences and the $\ET$ formulation in a very general case.
Let $f:\R_+ \longrightarrow [0,+\infty]$ be a convex, lower semi-continuous function such that $f(1)=0$, we define $f_{\infty}'(1) = \lim\limits_{t \to +\infty} \frac{f(t)}{t}$. Let $\mu,\nu \in \mc{M}(\R^d)$, the Lebesgue decomposition of $\mu$ with respect to $\nu$ is $\mu = \frac{d\mu}{d\nu}\nu + \nu^\perp$.
Then the $f$-divergence is given by
\[
D_f(\mu \vert \nu) = \int f(\frac{d\mu}{d\nu}) \ud \nu + f_{\infty}'(1).m_{\nu^{\perp}},
\]
With the convention that $+\infty.0=0$.
Now to define the $\ET$ in a  general setting, let $f: \R_+ \to [0,+\infty]$ such that $f$ is convex, lower semi-continuous and with $f(1)=0$, let $c(x,y) : \R^d \times \R^d \to [0,+\infty]$ be lower semi-continuous such that $c(x,x)=0$.  Then the entropy transport is given by
\[
\ET(\mu,\nu) = \inf\limits_{\gamma \in \mc{M}(\R^d \times \R^d)} \bigg\{ D_{f}(P_0\#\gamma \vert \mu) + D_{f}(P_1\#\gamma \vert \nu) + \int c(x,y) \ud\gamma(x,y) \bigg\},
\]
Where $P_0\#\gamma$ is the first marginal of $\gamma$ and $P_1\#\gamma$ is the second marginal of $\gamma$.\\
\begin{theorem}[Impossibility for $\ET$ metrics to extend $W_2$]
If $\min \big( f(0),f_{\infty}'(1) \big)<+\infty$, then $\ET$ cannot be a metric extending $W_2$ for positive measures on $\R^d$.
Otherwise, if $\min \big( f(0),f_{\infty}'(1) \big)=+\infty$, then $\ET(\mu,0_\mc{M})=+\infty$ for all $\mu\in\mc{M}(\R^d)\setminus\{0_{\mc{M}}\}$. 
\end{theorem}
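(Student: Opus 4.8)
The plan is to reduce both statements to two elementary identities for the $f$-divergence against the null measure, and then, in the finite case, to invoke the impossibility Property~\ref{th:impossibility1}. First I would record that for any $\eta\in\mc{M}(\R^d)$ the Lebesgue decomposition of $\eta$ with respect to $0_\mc{M}$ is purely singular, since the only measure absolutely continuous with respect to the zero measure is $0_\mc{M}$ itself; with the convention $+\infty\cdot 0=0$ this gives
\[
D_f(\eta\mid 0_\mc{M})=f_\infty'(1)\,m_\eta.
\]
Dually, the decomposition of $0_\mc{M}$ with respect to $\eta$ has vanishing singular part and density $0$, so that
\[
D_f(0_\mc{M}\mid\eta)=f(0)\,m_\eta.
\]
These two formulas are the only nontrivial ingredients; everything else is a matter of choosing good competitors in the infimum defining $\ET(\mu,0_\mc{M})$.

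For the first assertion, suppose $\min(f(0),f_\infty'(1))<+\infty$ and assume, for contradiction, that $\sqrt{\ET}$ were a metric on $\mc{M}(\R^d)$ coinciding with $W_2$ on $\mc{P}_2(\R^d)$ (this is the natural reading of ``$\ET$ extends $W_2$'', in view of $\HK^2=\ET$). I would then exhibit a uniform bound on $\ET(\mu,0_\mc{M})$ over probability measures by testing the infimum with two explicit plans. If $f(0)<+\infty$, take $\gamma=0_\mc{M}$, so that $P_0\#\gamma=P_1\#\gamma=0_\mc{M}$ and the identities above give $\ET(\mu,0_\mc{M})\le D_f(0_\mc{M}\mid\mu)=f(0)\,m_\mu$. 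If instead $f_\infty'(1)<+\infty$, take the diagonal plan $\gamma=(\mathrm{Id},\mathrm{Id})\#\mu$, whose marginals are both $\mu$ and which satisfies $\int c\,\ud\gamma=\int c(x,x)\,\ud\mu=0$ because $c(x,x)=0$; then $D_f(P_0\#\gamma\mid\mu)=D_f(\mu\mid\mu)=0$ and $\ET(\mu,0_\mc{M})\le D_f(\mu\mid 0_\mc{M})=f_\infty'(1)\,m_\mu$. In either case $\ET(\mu,0_\mc{M})\le\min(f(0),f_\infty'(1))\,m_\mu$, so for $\mu\in\mc{P}_2(\R^d)$ one gets $\sqrt{\ET(\mu,0_\mc{M})}\le\sqrt{\min(f(0),f_\infty'(1))}=:\alpha$, a bound independent of $\mu$. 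Since $\mc{X}=\R^d$ is unbounded, this is precisely the configuration forbidden by Property~\ref{th:impossibility1}, yielding the desired contradiction.

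For the second assertion, assume $\min(f(0),f_\infty'(1))=+\infty$, i.e.\ both $f(0)$ and $f_\infty'(1)$ are infinite, and fix $\mu\neq 0_\mc{M}$. I would argue that every competitor $\gamma$ forces the objective to be infinite: if $\gamma\neq 0_\mc{M}$ then $m_\gamma>0$ and $D_f(P_1\#\gamma\mid 0_\mc{M})=f_\infty'(1)\,m_\gamma=+\infty$; whereas if $\gamma=0_\mc{M}$ then $D_f(P_0\#\gamma\mid\mu)=D_f(0_\mc{M}\mid\mu)=f(0)\,m_\mu=+\infty$ since $m_\mu>0$. Taking the infimum over all $\gamma$ gives $\ET(\mu,0_\mc{M})=+\infty$.

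I expect the only delicate point to be the careful treatment of the Lebesgue decomposition against the degenerate reference measure $0_\mc{M}$ and the attendant conventions in the two identities above; once these are settled, the choice of the zero plan and of the diagonal plan makes all estimates immediate, and the impossibility reduces to a direct application of Property~\ref{th:impossibility1}.
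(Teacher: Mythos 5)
Your proof is correct and follows essentially the same route as the paper: the same two competitor plans ($\gamma=0_\mc{M}$ and the diagonal plan $(\mathrm{Id},\mathrm{Id})\#\mu$) yield the uniform bound $\ET(\mu,0_\mc{M})\le\min\big(f(0),f_{\infty}'(1)\big)$ over probability measures, which is then fed into Property~\ref{th:impossibility1} exactly as the paper does. You additionally write out the second assertion (both $f(0)$ and $f_{\infty}'(1)$ infinite forces every competitor to give $+\infty$), which the paper states but leaves implicit.
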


\begin{proof}
for $\mu \in \mc{P}_2(\R^d)$, we have the following bound:
\begin{equation}\label{eq:ETbound}
\ET(\mu,0_\mc{M}) \le \min \big( f(0),f_{\infty}'(1) \big).
\end{equation}
Indeed, by taking the sub-optimal transportation plan $\gamma=0_\mc{M}$ we have
\[ \ET(\mu,0_\mc{M}) \le D_f(0_\mc{M},\mu) + D_f(0_\mc{M},0_\mc{M}) + \int c(x,y) \ud 0_\mc{M} = f(0).m_\mu =f(0)
\]
And by taking the sub-optimal transport plan $\gamma=(Id,Id)\#\mu$ we obtain
\[ \ET(\mu,0_\mc{M}) \le D_f(\mu,\mu) + D_f(\mu,0_\mc{M}) + \int c(x,x) \ud\mu = f(1).m_\mu+f(0).0+f_{\infty}'(1).m_\mu =f_{\infty}'(1).
\]
Using our bound~\eqref{eq:ETbound} with Theorem~\ref{th:impossibility1}, we conclude that no metric based on the Entropy-Transport formulation can generalize $W_2$ for measures in $\R^d$.
\end{proof}

\paragraph{Extending $W_p$}
We end this section with a discussion on extending the idea to the $W_p$ metrics.

    We can define the $\WOP_p$ metric for $p \ge 1$ as follows,
    \[
     \WOP_{p}^p(\mu,\nu) = \vert m_\mu- m_\nu \vert^p + W_p^p( T_{m_\mu}\#\bar{\mu} ,T_{m_\nu}\#\bar{\nu} )
    \]
    
    And the space $\mc{M}_{K,p}(\R^d)$ as the set of positive measures such that the p-order moment at $x_0$, noted $M_{x_0,p}(\mu)$, verifies the following inequality
    
    \[
     M_{x_0,p}(\mu) \le Km_\mu
    \]
    
    Then $\WOP_{p}$ satisfy almost every properties verified by $\WOP$ :
    \begin{itemize}
    \item $\WOP_p$ is a metric on $\mc{M}(\R^d)$
    \item $\WOP_p$ generalizes $p$-Wasserstein
    \item $(\mc{M}_{K,p}(\R^d),\WOP_p)$ is complete for all $K>0$
    \item $\WOP_p$ metrizes the weak convergence + convergence of the $p^{th}$-order moments in $\mc{M}_{K,p}(\R^d)$ 
    \item $\WOP_{p}$ verifies the 1-homogeneity
    \item $\WOP_p$ is a geodesic metric where the geodeosic between two measures $\mu_0$ and $\mu_1$ is :
    \[
    \mu_t = m_t.\eta_t
    \]
    Where $m_t = (1-t)m_{\mu_0} + tm_{\mu_1}$ and $\eta_t$ is the geodesic in $(\mc{P}_2(\R^d),W_p)$ between the measures $T_{m_{\mu_0}}\#\bar{\mu}_0$ and  $T_{m_{\mu_1}}\#\bar{\mu}_1$
    \end{itemize}
    Note that the geodesics and the barycenters depend on the choice of the reference point $x_0$ in the case $p \neq 2$.

\printbibliography

@book{Villani2003,
  title={Topics in Optimal Transportation},
  author={C{\'e}dric Villani},
  year={2003},
  volume={58},
  publisher={American Mathematical Society}
}

@article{benamou2000computational,
  title={A computational fluid mechanics solution to the Monge-Kantorovich mass transfer problem},
  author={Benamou, Jean-David and Brenier, Yann},
  journal={Numerische Mathematik},
  volume={84},
  number={3},
  pages={375--393},
  year={2000},
  publisher={Springer-Verlag Berlin/Heidelberg}
}

@article{santambrogio2015optimal,
  title={Optimal transport for applied mathematicians},
  author={Santambrogio, Filippo},
  journal={Birk{\"a}user, NY},
  volume={55},
  number={58-63},
  pages={94},
  year={2015},
  publisher={Springer}
}

@inproceedings{rabin2012wasserstein,
  title={Wasserstein barycenter and its application to texture mixing},
  author={Rabin, Julien and Peyr{\'e}, Gabriel and Delon, Julie and Bernot, Marc},
  booktitle={Scale Space and Variational Methods in Computer Vision: Third International Conference, SSVM 2011, Ein-Gedi, Israel, May 29--June 2, 2011, Revised Selected Papers 3},
  pages={435--446},
  year={2012},
  organization={Springer}
}

@inproceedings{simon2020barycenters,
  title={Barycenters of natural images constrained wasserstein barycenters for image morphing},
  author={Simon, Dror and Aberdam, Aviad},
  booktitle={Proceedings of the IEEE/CVF Conference on Computer Vision and Pattern Recognition},
  pages={7910--7919},
  year={2020}
}

@article{bonneel2015sliced,
  title={Sliced and radon wasserstein barycenters of measures},
  author={Bonneel, Nicolas and Rabin, Julien and Peyr{\'e}, Gabriel and Pfister, Hanspeter},
  journal={Journal of Mathematical Imaging and Vision},
  volume={51},
  pages={22--45},
  year={2015},
  publisher={Springer}
}

@article{backhoff2022bayesian,
  title={Bayesian learning with Wasserstein barycenters},
  author={Backhoff-Veraguas, Julio and Fontbona, Joaquin and Rios, Gonzalo and Tobar, Felipe},
  journal={ESAIM: Probability and Statistics},
  volume={26},
  pages={436--472},
  year={2022},
  publisher={EDP Sciences}
}

@book{Ambrosio2005,
  title={Gradient Flows: In Metric Spaces and in the Space of Probability Measures},
  author={Luigi Ambrosio and Nicola Gigli and Giuseppe Savar{\'e}},
  year={2005},
  publisher={Birkh{\"a}user Basel}
}

@article{cuturi2013sinkhorn,
  title={Sinkhorn distances: Lightspeed computation of optimal transport},
  author={Cuturi, Marco},
  journal={Advances in neural information processing systems},
  volume={26},
  year={2013}
}

@article{thibault2021overrelaxed,
  title={Overrelaxed Sinkhorn--Knopp algorithm for regularized optimal transport},
  author={Thibault, Alexis and Chizat, L{\'e}na{\"\i}c and Dossal, Charles and Papadakis, Nicolas},
  journal={Algorithms},
  volume={14},
  number={5},
  pages={143},
  year={2021},
  publisher={MDPI}
}

@article{jacobs2020fast,
  title={A fast approach to optimal transport: The back-and-forth method},
  author={Jacobs, Matt and L{\'e}ger, Flavien},
  journal={Numerische Mathematik},
  volume={146},
  number={3},
  pages={513--544},
  year={2020},
  publisher={Springer}
}

@article{liero2018optimal,
  title={Optimal entropy-transport problems and a new Hellinger--Kantorovich distance between positive measures},
  author={Liero, Matthias and Mielke, Alexander and Savar{\'e}, Giuseppe},
  journal={Inventiones mathematicae},
  volume={211},
  number={3},
  pages={969--1117},
  year={2018},
  publisher={Springer}
}

@article{chizat2018unbalanced,
  title={Unbalanced optimal transport: Dynamic and Kantorovich formulations},
  author={Chizat, Lenaic and Peyr{\'e}, Gabriel and Schmitzer, Bernhard and Vialard, Fran{\c{c}}ois-Xavier},
  journal={Journal of Functional Analysis},
  volume={274},
  number={11},
  pages={3090--3123},
  year={2018},
  publisher={Elsevier}
}

@article{piccoli2014generalized,
  title={Generalized Wasserstein distance and its application to transport equations with source},
  author={Piccoli, Benedetto and Rossi, Francesco},
  journal={Archive for Rational Mechanics and Analysis},
  volume={211},
  number={1},
  pages={335--358},
  year={2014},
  publisher={Springer}
}

@article{altschuler2017near,
  title={Near-linear time approximation algorithms for optimal transport via Sinkhorn iteration},
  author={Altschuler, Jason and Niles-Weed, Jonathan and Rigollet, Philippe},
  journal={Advances in neural information processing systems},
  volume={30},
  year={2017}
}

@inproceedings{rabin2014adaptive,
  title={Adaptive color transfer with relaxed optimal transport},
  author={Rabin, Julien and Ferradans, Sira and Papadakis, Nicolas},
  booktitle={2014 IEEE international conference on image processing (ICIP)},
  pages={4852--4856},
  year={2014},
  organization={IEEE}
}

@article{shen2021accurate,
  title={Accurate Point Cloud Registration with Robust Optimal Transport},
  author={Shen, Zhengyang and Feydy, Jean and Liu, Peirong and Curiale, Ariel H and San Jose Estepar, Ruben and San Jose Estepar, Raul and Niethammer, Marc},
  journal={Advances in Neural Information Processing Systems},
  volume={34},
  pages={5373--5389},
  year={2021}
}

@article{bonneel2019spot,
  title={Spot: sliced partial optimal transport},
  author={Bonneel, Nicolas and Coeurjolly, David},
  journal={ACM Transactions on Graphics (TOG)},
  volume={38},
  number={4},
  pages={1--13},
  year={2019},
  publisher={ACM New York, NY, USA}
}

@inproceedings{liu2019wasserstein,
  title={Wasserstein gan with quadratic transport cost},
  author={Liu, Huidong and Gu, Xianfeng and Samaras, Dimitris},
  booktitle={Proceedings of the IEEE/CVF international conference on computer vision},
  pages={4832--4841},
  year={2019}
}

@article{mroueh2019wasserstein,
  author    = {Youssef Mroueh},
  title     = {Wasserstein Style Transfer},
  journal   = {CoRR},
  volume    = {abs/1905.12828},
  year      = {2019},
  url       = {http://arxiv.org/abs/1905.12828},
  eprinttype = {arXiv},
  eprint    = {1905.12828},
  timestamp = {Mon, 03 Jun 2019 13:42:33 +0200},
  biburl    = {https://dblp.org/rec/journals/corr/abs-1905-12828.bib},
  bibsource = {dblp computer science bibliography, https://dblp.org}
}

@article{schiebinger2019optimal,
  title={Optimal-transport analysis of single-cell gene expression identifies developmental trajectories in reprogramming},
  author={Schiebinger, Geoffrey and Shu, Jian and Tabaka, Marcin and Cleary, Brian and Subramanian, Vidya and Solomon, Aryeh and Gould, Joshua and Liu, Siyan and Lin, Stacie and Berube, Peter},
  journal={Cell},
  volume={176},
  number={4},
  pages={928--943},
  year={2019},
  publisher={Elsevier}
}

@article{ponti2022unreasonable,
  title={The “Unreasonable” Effectiveness of the Wasserstein Distance in Analyzing Key Performance Indicators of a Network of Stores},
  author={Ponti, Andrea and Giordani, Ilaria and Mistri, Matteo and Candelieri, Antonio and Archetti, Francesco},
  journal={Big Data and Cognitive Computing},
  volume={6},
  number={4},
  pages={138},
  year={2022},
  publisher={MDPI}
}

@article{gallouet2021regularity,
  title={Regularity theory and geometry of unbalanced optimal transport},
  author={Gallou{\"e}t, Thomas and Ghezzi, Roberta and Vialard, Fran{\c{c}}ois-Xavier},
  journal={arXiv preprint arXiv:2112.11056},
  year={2021}
}

@article{chizat2018interpolating,
  title={An interpolating distance between optimal transport and Fisher--Rao metrics},
  author={Chizat, Lenaic and Peyr{\'e}, Gabriel and Schmitzer, Bernhard and Vialard, Fran{\c{c}}ois-Xavier},
  journal={Foundations of Computational Mathematics},
  volume={18},
  number={1},
  pages={1--44},
  year={2018},
  publisher={Springer}
}

@article{lombardi2015eulerian,
  title={Eulerian models and algorithms for unbalanced optimal transport},
  author={Lombardi, Damiano and Maitre, Emmanuel},
  journal={ESAIM: Mathematical Modelling and Numerical Analysis},
  volume={49},
  number={6},
  pages={1717--1744},
  year={2015},
  publisher={EDP Sciences}
}

@article{maas2015generalized,
  title={A generalized model for optimal transport of images including dissipation and density modulation},
  author={Maas, Jan and Rumpf, Martin and Sch{\"o}nlieb, Carola and Simon, Stefan},
  journal={ESAIM: Mathematical Modelling and Numerical Analysis},
  volume={49},
  number={6},
  pages={1745--1769},
  year={2015},
  publisher={EDP Sciences}
}

@article{monge1781memoire,
  title={M{\'e}moire sur la th{\'e}orie des d{\'e}blais et des remblais},
  author={Monge, Gaspard},
  journal={Mem. Math. Phys. Acad. Royale Sci.},
  pages={666--704},
  year={1781}
}

@inproceedings{kantorovich1942translocation,
  title={On the translocation of masses},
  author={Kantorovich, Leonid V},
  booktitle={Dokl. Akad. Nauk. USSR (NS)},
  volume={37},
  pages={199--201},
  year={1942}
}

@inproceedings{kantorovich1948problem,
  title={On a problem of Monge},
  author={Kantorovich, Leonid V},
  booktitle={CR (Doklady) Acad. Sci. URSS (NS)},
  volume={3},
  pages={225--226},
  year={1948}
}

@article{jordan1998variational,
  title={The variational formulation of the Fokker--Planck equation},
  author={Jordan, Richard and Kinderlehrer, David and Otto, Felix},
  journal={SIAM journal on mathematical analysis},
  volume={29},
  number={1},
  pages={1--17},
  year={1998},
  publisher={SIAM}
}

@article{agueh2011barycenters,
  title={Barycenters in the Wasserstein space},
  author={Agueh, Martial and Carlier, Guillaume},
  journal={SIAM Journal on Mathematical Analysis},
  volume={43},
  number={2},
  pages={904--924},
  year={2011},
  publisher={SIAM}
}

@article{brenier1991polar,
  title={Polar factorization and monotone rearrangement of vector-valued functions},
  author={Brenier, Yann},
  journal={Communications on pure and applied mathematics},
  volume={
  44},
  number={4},
  pages={375--417},
  year={1991},
  publisher={Wiley Online Library}
}

@article{COTFNT,
    year = {2019},
    volume = {11},
    journal = {Foundations and Trends in Machine Learning},
    title = {Computational Optimal Transport},
    number = {5-6},
    pages = {355--607},
    author = {Gabriel Peyr{\'e} and Marco Cuturi}
}

@misc{kondratyev2016new,
  doi = {10.48550/ARXIV.1505.07746},
  url = {https://arxiv.org/abs/1505.07746},
  author = {Kondratyev, Stanislav and Monsaingeon, Léonard and Vorotnikov, Dmitry},
  keywords = {Analysis of PDEs (math.AP), FOS: Mathematics, FOS: Mathematics, 28A33, 35Q92, 49Q20 (primary), 58B20 (secondary)},
  title = {A new optimal transport distance on the space of finite Radon measures},
  publisher = {arXiv},
  year = {2015},
  copyright = {arXiv.org perpetual, non-exclusive license}
}

\end{document}